\newtheorem{dfn} [subsection]{Definition}
\newtheorem{obs} [subsection]{Remark}
\newtheorem{prop}[subsection]{Proposition}
\newtheorem{teor}[subsection]{Theorem}
\newtheorem{cor} [subsection]{Corollary}
\newcommand{\OGR}{\mathcal F(\Gamma,R)}
\newcommand{\OGM}{\mathcal F(\Gamma,M)}
\newcommand{\OGN}{\mathcal F(\Gamma,N)}
\newcommand{\OGU}{\mathcal F(\Gamma,U)}
\newcommand{\wF}{\mathcal F^f}
\def\Ree{\operatorname{Re}}
\def\Der{\operatorname{Der}}
\def\Ker{\operatorname{Ker}}
\def\RMod{\operatorname{R-Mod}}
\numberwithin{equation}{section}
\begin{document}
\selectlanguage{english}
\frenchspacing

\large
\begin{center}
\textbf{On the algebraic properties of the ring of Dirichlet convolutions}

Mircea Cimpoea\c s
\end{center}
\normalsize

\begin{abstract}
Let $R$ be a commutative ring and $\Gamma$ a commutative monoid of finite type. We study algebraic properties of
modules and derivations over the associated ring $\mathcal F(\Gamma,R)$ of Dirichlet convolutions.
If $\Gamma$ is cancellative and $G(\Gamma)$ is its associated Grothendieck group, we construct a
natural extension $\mathcal F^f(G(\Gamma),R)$ of $\mathcal F(\Gamma,R)$ and we study its basic properties. Further properties are discussed in the case $\Gamma=\mathbb N^*$ and $G(\Gamma)=\mathbb Q^*_+$. In particular, we show that 
$\mathcal F^f(\mathbb Q^*_+,R)\cong R[[x_1,x_2,\ldots,]][x_1^{-1},x_2^{-1},\ldots]$.

\noindent \textbf{Keywords:} Dirichlet convolutions; formal power series; Groethendieck group. 

\noindent \textbf{2010 Mathematics Subject Classification:} 13G05; 13N15; 13F25.
\end{abstract}


\section*{Introduction}

Let $(\Gamma,\cdot)$ be a commutative monoid of finite type and let $R$ be a commutative ring with unity. 
In the set of functions defined on $\Gamma$ with values in $R$,
$\OGR:=\{\alpha:\Gamma\rightarrow R\}$,
we consider the operations
$$ (\alpha+\beta)(n):=\alpha(n)+\beta(n),\;(\forall)n\in \Gamma \text{ and } 
 (\alpha \cdot \beta)(n):=\sum_{ab=n}\alpha(a)\beta(b),\;(\forall)n\in \Gamma.$$
It is well known that $(\OGR,+,\cdot)$ is a commutative ring with unity, see for instance \cite{ber}.
Let $M$ be a $R$-module. Let $\mathcal F(\Gamma,M):=\{f:\Gamma \rightarrow M\}$.
For any $f,g\in \OGM$ and $\alpha\in \OGR$, we define:
$$ (f+g)(n):=f(n)+g(n),\;(\forall)n\in \Gamma\;\text{ and }
 (\alpha \cdot f)(n):=\sum_{ab=n}\alpha(a)f(b),\;(\forall)n\in \Gamma.$$
We show that $\mathcal F(\Gamma,M)$ has a natural structure of a $\mathcal F(\Gamma,R)$-module
and we discuss certain properties of the functor $M\mapsto \mathcal(\Gamma,M)$, see Proposition $1.3$. Given a morphism of monoids
$L:(\Gamma,\cdot)\rightarrow (M,+)$, we show that the induced map 
$\Phi_{L,M}:\mathcal F(\Gamma,M) \rightarrow \mathcal F_{L}(\Gamma,M),\;\Phi_{L,M}(f)(n):=L(n)f(n)$
is a morphism of $\mathcal F(\Gamma,R)$-modules, see Proposition $1.6$.

Let $A$ be a commutative ring with unity and let $i:A\rightarrow R$ be a morphism of rings with unity. 
Let $M$ be an $R$-module. An $A$-derivation $D:R \rightarrow M$ is an $A$-linear map, satisfying the Leibniz rule, i.e. 
$D(fg)=fD(g)+gD(f), (\forall)f,g\in R$.
The set of $A$-derivations $\Der_A(R,M)$ has a natural structure of a $R$-module. 
Let $D\in \Der_A(R,M)$ and let $\delta:\Gamma \rightarrow (M,+)$ be a morphism of monoids. In Proposition $1.7$, we prove that
$$\widetilde D:\mathcal F(\Gamma,R) \rightarrow \mathcal F(\Gamma,M),\; \widetilde D(\alpha)(n)=D(\alpha(n))+\alpha(n)\delta(n),\;(\forall)n\in\Gamma, \text{ is an $A$-derivation}.$$ 
Assume that the monoid $(\Gamma,\cdot)$ is cancellative, i.e. $xy = xz$ implies $y=z$. Let $G(\Gamma)$ be the Grothendieck group 
associated to $\Gamma$, see \cite{atiyah}. In the set
$$\wF(G(\Gamma),R):=\{\alpha:G(\Gamma) \rightarrow R\;:\; (\exists)d\in\Gamma \text{ such that } (\forall)q\in G(\Gamma), 
\alpha(q)\neq 0 \Rightarrow dq\in\Gamma\}.$$
we consider the operations
$$ (\alpha+\beta)(q):=\alpha(q)+\beta(q),\;(\forall)q\in G(\Gamma) \text{ and } 
 (\alpha \cdot \beta)(q):=\sum_{q'q''=q}\alpha(q')\beta(q''),\;(\forall)q\in G(\Gamma).$$
We prove that $\wF(G(\Gamma),R)$ is an extension of the ring $\mathcal F(\Gamma,R)$, see Proposition $3.2$.
Let $M$ be an $R$-module. We consider the set
$$\wF(G(\Gamma),M):=\{f:G(\Gamma) \rightarrow M\;:\; (\exists)d\in\Gamma \text{ such that } (\forall)q\in G(\Gamma) 
f(q)\neq 0 \Rightarrow dq\in\Gamma\}.$$
For $f,q\in\wF(G(\Gamma),M)$ we define $(f+g)(q):=f(q)+g(q),(\forall)q\in G(M)$.
Given $\alpha\in \wF(G(\Gamma),R)$ and $f\in\wF(G(\Gamma),M)$ we define 
$(\alpha\cdot f)(q):= \sum_{q'q''=q}\alpha(q')f(q''),\;(\forall)q\in G(\Gamma)$.
We prove that $\wF(G(\Gamma),M)$ has a structure of an $\wF(G(\Gamma),R)$-module and we study the
connections between the associations $M\mapsto \mathcal F(\Gamma,M)$ and $M\mapsto \wF(\Gamma,M)$, see Proposition $3.3$.
In particular, in Proposition $3.4$ and Remark $3.5$, we show that if $D\in \Der_A(R,M)$ is an $A$-derivation and $\delta:\Gamma \rightarrow (M,+)$ is a morphism of monoids,
then we can construct an $A$-derivation on $\overline D:\wF(G(\Gamma),R)\rightarrow \wF(G(\Gamma),M)$ which
extend $\widetilde D$.

The most important case, largely studied in analytic number theory \cite{apostol}, is the case when $R$ is a domain (or even more
particullary, when $R=\mathbb C$) and 
$\Gamma=\mathbb N^*$ is the multiplicative monoid of positive integers. Cashwell and Everett showed in \cite{cash} that 
$\mathcal F(\mathbb N^*,R)$ is also a domain. Moreover, if $R$ is an UFD with the property that $R[[x_1,\ldots,x_n]]$ are
UFD for any $n\geq 1$, then $\mathcal F(\mathbb N^*,R)$ is also an UFD, see \cite{cash2}. It is well known that the Grothendieck
group associated to $\mathbb N^*$ is $\mathbb Q_{+}^*:=$ the group of positive rational numbers. We show that
$$\wF(\mathbb Q^*_+,R) \cong R[[x_1,x_2,\ldots]][x_1^{-1},x_2^{-1},\ldots],$$
and, in particular, if $R[[x_1,x_2,\ldots]]$ is UFD, then $\wF(\mathbb Q^*_+,R)$ is an UFD, see Proposition $3.6$ and Corollary $3.7$.
We also note in the Remark $3.8$ the connections with the field of Hahn series \cite{hahn} associated to a ordered group.

Let $U\subset \mathbb C$ be an open set and let $\mathcal O(U)$ be the ring of holomorphic functions defined in $U$ with values in $\mathbb C$. It is well known that $\mathcal O(U)$ is a domain. We consider 
$$\widetilde D: \mathcal F(\mathbb N^*,\mathcal O(U)) \rightarrow \mathcal F(\mathbb N^*,\mathcal O(U)), \; \widetilde  D(\alpha)(n)(z):=\alpha(n)'(z) - \alpha(n)(z) \log n, \; (\forall)n\in \mathbb N, z\in U.$$
We note that $\widetilde D$ is a $\mathbb C$-derivation on $\mathcal F(\mathbb N^*,\mathcal O(U))$, see Proposition $2.9$.
Assume that the series of functions $F_{\alpha}(z):=\sum_{n=1}^{\infty} \frac{\alpha(n)(z)}{n^z}$, $z\in U$, and
and $G_{\alpha}(z)= \sum_{n=1}^{\infty} \left(\frac{\alpha(n)(z)}{n^z}\right)'$, $z\in U$, are uniformly convergent on the compact subsets $K\subset U$. It is well known that, in this case, $F$ defines a derivable
(holomorphic) function on $U$ and, moreover, $F'=G$. It is easy to see that $F'_{\alpha} = F_{\widetilde D(\alpha)}$. See also Remark $2.10$.

Further connections between the $\mathbb C$-linear independence of 
$\alpha_1,\ldots,\alpha_m\in \mathcal F(\mathbb N^*,\mathcal O(U))$, with some suplimentary conditions,
 and the linear independence of the series $F_{\alpha_1},\ldots,F_{\alpha_m}$ and their derivatives, over the field of meromorphic function of order $<1$, were made in \cite{cimf} and \cite{cim}, but they
are beyond the scope of our paper.

\newpage
\section{The ring of Dirichlet convolutions of a monoid of finite type}

A \emph{semigroup} (written multiplicatively) is a nonempty set $\Gamma$ with a binary operation $x\cdot y$ that is associative. 
A \emph{monoid} is a semigroup having an element $1$ that is neutral for the operation.
A monoid $(\Gamma,\cdot,1)$ is said to be of \emph{finite type} if 
for each $n\in \Gamma$, the set of ordered pairs $\{(a, b) \in \Gamma \times \Gamma \; : \; n = ab\}$ is finite. 
Let $R$ be a ring with unity. 

In the set of functions defined of $\Gamma$ with values in $R$,
$$\OGR:=\{\alpha:\Gamma\rightarrow R\},$$
we consider the operations
$$ (\alpha+\beta)(n):=\alpha(n)+\beta(n),\;(\forall)n\in \Gamma, $$
$$ (\alpha \cdot \beta)(n):=\sum_{ab=n}\alpha(a)\beta(b),\;(\forall)n\in \Gamma.$$
We denote $0,e \in \OGR$ the functions 
$$0(n)=0,(\forall)n\in\Gamma, \; e(1)=1, e(n)=0, \; (\forall)n\in \Gamma\setminus \{1\}.$$
Given $\alpha\in \OGR$, $-\alpha$ is the function defined by 
$$(-\alpha)(n):=-\alpha(n), \; (\forall)n\in \Gamma.$$
It is easy to check that $(\OGR,+,0)$ is an abelian group and $-\alpha+\alpha=0$, $(\forall)\alpha\in \OGR$.
Also, by straigtforward computations, we can show that $(\OGR,\cdot,e)$ is a monoid and $\cdot$ is distributive with respect to $+$. 
On the other hand, if $\Gamma$ and $R$ are commutative, then 
$$(\alpha\cdot \beta)(n) = \sum_{ab=n}\alpha(a)\beta(b) = \sum_{ba=n}\alpha(b)\beta(a) = (\beta \cdot \alpha)(n),\;(\forall)n\in \Gamma.$$
Hence, we have the following result.

\begin{prop}(\cite[Theorem 5]{ber})
$(\OGR,+,\cdot)$ is a ring with unity. Moreover, if $\Gamma$ and $R$ are commutative, then $\OGR$ is commutative.
\end{prop}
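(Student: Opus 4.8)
The plan is to verify the ring axioms directly, letting the finite-type hypothesis do all the essential work of keeping every sum finite. First I would observe that for each $n\in\Gamma$ the defining sum $(\alpha\cdot\beta)(n)=\sum_{ab=n}\alpha(a)\beta(b)$ ranges over the finite set $\{(a,b)\in\Gamma\times\Gamma : ab=n\}$, so it is a genuine finite sum in $R$ and the product is well defined. The additive structure is inherited pointwise from $R$: since $(R,+)$ is an abelian group, so is $(\OGR,+)$ with neutral element $0$ and inverse $-\alpha$, as already recorded in the excerpt.

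The identity element is $e$. For the left unit I would compute $(e\cdot\alpha)(n)=\sum_{ab=n}e(a)\alpha(b)$ and note that $e(a)=0$ unless $a=1$, so only the pair $(1,n)$ survives (since $1\cdot b=n$ forces $b=n$), giving $(e\cdot\alpha)(n)=\alpha(n)$; the right unit is symmetric. Distributivity then reduces to the identity $\alpha(a)(\beta(b)+\gamma(b))=\alpha(a)\beta(b)+\alpha(a)\gamma(b)$ inside each fixed finite sum, together with the fact that the index set $\{(a,b):ab=n\}$ is the same for all three products involved, so the verification is termwise.

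The heart of the argument, and the step I expect to be the main obstacle, is associativity of the convolution. My approach is to expand both triple products and show they equal a common symmetric sum over triples. Concretely, I would prove
\[
((\alpha\beta)\gamma)(n)=\sum_{abc=n}\alpha(a)\beta(b)\gamma(c)=(\alpha(\beta\gamma))(n),
\]
where the middle sum ranges over all triples $(a,b,c)\in\Gamma\times\Gamma\times\Gamma$ with $abc=n$. The two outer equalities follow by substituting the definition and interchanging the order of summation, and this interchange is legitimate precisely because the triple index set is finite. To see finiteness I would note that $\{(a,b,c):abc=n\}$ is a finite union, indexed by the finitely many factorizations $n=md$, of the finite sets $\{(a,b):ab=m\}\times\{d\}$, with all finiteness coming from the finite-type condition. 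Associativity of the monoid operation in $\Gamma$ then guarantees that grouping as $(ab)c$ or $a(bc)$ produces the same set of triples, so the two iterated sums collapse to the same unordered triple sum.

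Finally, commutativity when both $\Gamma$ and $R$ are commutative is exactly the computation displayed in the excerpt: the change of variables $(a,b)\mapsto(b,a)$ is a bijection on $\{(a,b):ab=n\}$, and commutativity of $R$ lets us rewrite $\alpha(a)\beta(b)=\beta(b)\alpha(a)$, yielding $\alpha\cdot\beta=\beta\cdot\alpha$. Thus the only genuinely nontrivial content is the associativity verification and the accompanying bookkeeping of finite index sets; every other axiom is a termwise transfer of the corresponding property of $R$.
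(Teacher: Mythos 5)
Your proposal is correct and matches the paper's treatment: the paper dismisses the additive structure, the monoid axioms for $\cdot$, and distributivity as straightforward pointwise/finite-sum verifications (citing Berberian) and displays only the commutativity computation via the swap $(a,b)\mapsto(b,a)$, which you reproduce exactly. Your explicit associativity argument via the common triple sum $\sum_{abc=n}\alpha(a)\beta(b)\gamma(c)$, with finiteness of the triple index set coming from the finite-type hypothesis, is precisely the standard computation the paper leaves implicit.
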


In the following, we will assume that $R$ is a commutative ring with unity and $\Gamma$ is a commutative monoid of finite type.

\begin{obs}\emph{
 The map $i:R \rightarrow \OGR$ defined by $i(r)(1)=r$ and $i(r)(n)=0$, $(\forall)n\geq 2$, is a ring monomorphism, hence $\OGR$ has a natural structure of a $R$-algebra.
 }
\end{obs}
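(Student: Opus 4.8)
The plan is to check directly the three defining properties of a ring homomorphism—additivity, multiplicativity, and preservation of the unit—and then to establish injectivity; the $R$-algebra structure will follow formally. Throughout I interpret the condition ``$i(r)(n)=0$ for $n\geq 2$'' as $i(r)(n)=0$ for every $n\in\Gamma\setminus\{1\}$, so that $i(r)$ is the function supported only at the identity $1$, with value $r$ there.

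Additivity and the unit are immediate: evaluating $i(r+s)$ at $1$ gives $r+s=i(r)(1)+i(s)(1)$, while both sides vanish on $\Gamma\setminus\{1\}$, so $i(r+s)=i(r)+i(s)$; and $i(1)$ is by definition the function $e$. The only step requiring a moment of care is multiplicativity, because in a general monoid the identity $1$ may admit factorizations $ab=1$ with $a\neq 1$. I would compute, for $n\in\Gamma$,
$$(i(r)\cdot i(s))(n)=\sum_{ab=n} i(r)(a)\,i(s)(b).$$
Since $i(r)(a)=0$ unless $a=1$ and $i(s)(b)=0$ unless $b=1$, the only pair $(a,b)$ that can contribute is $(1,1)$, whose product is $1$. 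Hence the sum equals $rs$ when $n=1$ and $0$ otherwise, which is exactly $i(rs)$. Note that this argument is insensitive to the existence of nontrivial factorizations of $1$, precisely because the terms indexed by such pairs are killed by the support condition on $i(r)$ and $i(s)$.

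For injectivity I would simply observe that $i(r)=0$ forces $r=i(r)(1)=0$, so $\Ker i=0$. Having shown that $i$ is a ring monomorphism, the $R$-algebra structure is automatic: by Proposition~1.1 the ring $\OGR$ is commutative, so the homomorphism $i$ lands in its centre and endows $\OGR$ with a structure of $R$-algebra via $r\cdot\alpha:=i(r)\cdot\alpha$. I do not anticipate any real obstacle here; the single point worth flagging is the factorization remark in the multiplicativity check, which is the reason the statement holds verbatim for an arbitrary commutative monoid of finite type and not merely for $\mathbb{N}^*$.
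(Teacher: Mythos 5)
Your proof is correct and complete: the paper states this as a remark without proof, and your direct verification (additivity, multiplicativity, unit, injectivity, then the algebra structure via commutativity of $\OGR$) is exactly the routine argument the paper takes for granted. Your handling of possible nontrivial factorizations $ab=1$ in the multiplicativity check is a nice touch of care, and it is handled correctly, since any such pair has both entries different from $1$ and hence contributes zero.
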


Let $M$ be a $R$-module. We denote $\mathcal F(\Gamma,M):=\{f:\Gamma \rightarrow M\}$,
the set of functions defined on $\Gamma$ with values in $M$. For any $f,g\in \OGM$ and $\alpha\in \OGR$, we define:
$$ (f+g)(n):=f(n)+g(n),\;(\forall)n\in \Gamma\;\text{ and }$$
$$ (\alpha \cdot f)(n):=\sum_{ab=n}\alpha(a)f(b),\;(\forall)n\in \Gamma.$$
We denote $\RMod$ the category of $R$-modules.

\begin{prop}
 With the above notations
\begin{enumerate}
 \item[(1)] $\OGM$ has a structure of $\OGR$-module.
 \item[(2)] $\mathcal F(\Gamma,-):\RMod \rightarrow \RMod$ is an exact covariant functor.
\end{enumerate}
\end{prop}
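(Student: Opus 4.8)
The plan for (1) is to verify the module axioms directly. The componentwise addition already makes $\OGM$ an abelian group, with the zero function as neutral element. For the scalar multiplication $(\alpha\cdot f)(n)=\sum_{ab=n}\alpha(a)f(b)$ — a finite sum, since $\Gamma$ is of finite type — the two distributivity laws and the unit law $e\cdot f=f$ follow by the same routine manipulations that establish the ring structure of $\OGR$ in Proposition 1.1. The one axiom deserving attention is the mixed associativity $(\alpha\cdot\beta)\cdot f=\alpha\cdot(\beta\cdot f)$; expanding both sides and reindexing by means of the associativity of $\Gamma$, each side equals $\sum_{a_1a_2b=n}\alpha(a_1)\beta(a_2)f(b)$, so they coincide.

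For (2), I would first extend $\mathcal F(\Gamma,-)$ to morphisms: given an $R$-linear map $\varphi:M\to N$, define $\mathcal F(\Gamma,\varphi):\OGM\to\OGN$ by $\mathcal F(\Gamma,\varphi)(f)(n):=\varphi(f(n))$, that is, $f\mapsto\varphi\circ f$. Functoriality, namely $\mathcal F(\Gamma,\operatorname{id}_M)=\operatorname{id}_{\OGM}$ and $\mathcal F(\Gamma,\psi\circ\varphi)=\mathcal F(\Gamma,\psi)\circ\mathcal F(\Gamma,\varphi)$, together with covariance, is immediate from this pointwise definition. That each $\mathcal F(\Gamma,\varphi)$ is a morphism in $\RMod$ — in fact an $\OGR$-linear map — follows from the additivity and $R$-linearity of $\varphi$, since applying $\varphi$ pointwise commutes with the convolution sum $\sum_{ab=n}\alpha(a)f(b)$.

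The substantive part is exactness. The key observation is that, forgetting the convolution and retaining only the componentwise $R$-module structure, $\OGM$ is the product $\prod_{n\in\Gamma}M$, and that kernels, images and cokernels in $\RMod$ are computed componentwise. Thus, starting from a short exact sequence $0\to M'\xrightarrow{\varphi}M\xrightarrow{\psi}M''\to 0$, the injectivity of $\mathcal F(\Gamma,\varphi)$ and the surjectivity of $\mathcal F(\Gamma,\psi)$ are verified value by value, the latter by selecting for each $n\in\Gamma$ a $\psi$-preimage of the prescribed value. For exactness in the middle, a function $f$ lies in $\Ker\mathcal F(\Gamma,\psi)$ precisely when $f(n)\in\Ker\psi=\operatorname{Im}\varphi$ for every $n\in\Gamma$, which is exactly the condition that $f=\varphi\circ g$ for some $g\in\mathcal F(\Gamma,M')$; hence $\Ker\mathcal F(\Gamma,\psi)=\operatorname{Im}\mathcal F(\Gamma,\varphi)$. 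I expect the only delicate point to be a matter of bookkeeping: one must check that the pointwise choices of preimages assemble into a genuine element of $\mathcal F(\Gamma,M')$, which they do, since membership there imposes no condition beyond being a function $\Gamma\to M'$.
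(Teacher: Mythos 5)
Your proof is correct and takes essentially the same route as the paper: in part (1) the only axiom given a real argument is the mixed associativity $(\alpha\cdot\beta)\cdot f=\alpha\cdot(\beta\cdot f)$, verified by reindexing the convolution sum over triples $a_1a_2b=n$ (you state this axiom correctly, whereas the paper's displayed formulas garble it slightly), and in part (2) the functor acts on morphisms by post-composition $f\mapsto\varphi\circ f$. The only difference is that you actually spell out the exactness check via the componentwise identification $\mathcal F(\Gamma,M)\cong\prod_{n\in\Gamma}M$, which the paper leaves as ``straightforward computations''; this is a filling-in of omitted detail, not a different approach.
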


\begin{proof}
$(1)$ It is easy to see that $(\OGM,+)$ is a commutative group. Let $\alpha\in \OGR$, $f,g\in \OGM$ and $n\in \Gamma$. Then
$$ ((\alpha\cdot f)\cdot g)(n) = \sum_{ab=n}(\alpha\cdot f)(a)g(b) = \sum_{(a'a'')b=n}(\alpha(a')f(a''))g(b).$$
On the other hand,
$$ (\alpha\cdot (f\cdot g))(n) = \sum_{ab=n}\alpha(a)(f\cdot g)(b) = \sum_{a(b'b'')=n}\alpha(a)(f(b')g(b'')),$$
hence $(\alpha\cdot f)\cdot g = \alpha\cdot (f\cdot g)$. The other axioms of modules can be proved similarly.

$(2)$ Let $\varphi:M\rightarrow N$ be a morphism of $R$-modules. Then 
$$\mathcal F(\Gamma,\varphi)=:\varphi_*:\OGM \rightarrow \OGN, \text{ is defined by } \varphi_*(f):=\varphi\circ f, (\forall)f\in \OGM.$$
Also, if $0\rightarrow U \rightarrow M\rightarrow N \rightarrow 0$ is an exact sequence of $R$-modules, then, by straigtforward computations, we can check that
$0\rightarrow \OGU \rightarrow \OGM \rightarrow \OGN \rightarrow 0$ is exact.
\end{proof}

\begin{cor}
If $R'$ is a $R$-algebra with the structure morphism $i:R\rightarrow R'$, 
then $\mathcal F(\Gamma,R')$ has a natural structure of an $\mathcal F(\Gamma,R)$-algebra, given by 
$$i_*:\OGR\rightarrow \mathcal F(\Gamma,R'),\; i_*(f):=i\circ f$$
\end{cor}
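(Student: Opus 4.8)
The plan is to exhibit $i_*$ as a unital homomorphism of rings and then to observe that the resulting algebra structure is compatible with the $\mathcal F(\Gamma,R)$-module structure furnished by Proposition 1.3. First, since $R'$ is in particular a commutative ring with unity, Proposition 1.1 applied to $R'$ in place of $R$ shows that $(\mathcal F(\Gamma,R'),+,\cdot)$ is itself a commutative ring with unity; write $e'$ for its identity. The assertion that $\mathcal F(\Gamma,R')$ is an $\mathcal F(\Gamma,R)$-algebra then reduces to checking that $i_*:\mathcal F(\Gamma,R)\rightarrow \mathcal F(\Gamma,R')$ is a unital ring homomorphism.

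Additivity of $i_*$ is immediate, and is in fact already recorded as the functor $\mathcal F(\Gamma,i)=i_*$ of Proposition 1.3(2), since the $R$-module structure on $R'$ makes $i$ an $R$-linear map. The substantive point is multiplicativity: for $f,g\in\mathcal F(\Gamma,R)$ and $n\in\Gamma$ I would compute
$$ i_*(f\cdot g)(n) = i\Big(\sum_{ab=n} f(a)g(b)\Big) = \sum_{ab=n} i\big(f(a)g(b)\big) = \sum_{ab=n} i(f(a))\,i(g(b)) = \big(i_*(f)\cdot i_*(g)\big)(n), $$
where the second equality uses that $\Gamma$ is of finite type, so $\{(a,b):ab=n\}$ is finite and the additive map $i$ commutes with this finite sum, and the third equality uses that $i$ is multiplicative. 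Preservation of the unit is the direct verification $i_*(e)(1)=i(1_R)=1_{R'}=e'(1)$ and $i_*(e)(n)=i(0)=0=e'(n)$ for $n\neq 1$, so that $i_*(e)=e'$.

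Having shown that $i_*$ is a unital ring homomorphism, $\mathcal F(\Gamma,R')$ acquires an $\mathcal F(\Gamma,R)$-algebra structure in which $\alpha$ acts on $h$ by $\alpha\cdot h := i_*(\alpha)\cdot h$. To justify the word \emph{natural}, I would finally check that this action coincides with the $\mathcal F(\Gamma,R)$-module structure on $\mathcal F(\Gamma,R')$ supplied by Proposition 1.3 with $M=R'$: for $\alpha\in\mathcal F(\Gamma,R)$, $h\in\mathcal F(\Gamma,R')$ and $n\in\Gamma$ both sides equal $\sum_{ab=n} i(\alpha(a))\,h(b)$, since the action $\alpha(a)\cdot h(b)$ of the scalar $\alpha(a)\in R$ on $h(b)\in R'$ is by definition the product $i(\alpha(a))h(b)$ in $R'$.

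I expect no genuine obstacle here. The only place where the hypotheses truly enter is the multiplicativity computation: finiteness of $\{(a,b):ab=n\}$, i.e.\ the finite-type assumption on $\Gamma$, is what allows $i$ to pass through the Dirichlet convolution, while the ring-homomorphism property of $i$ supplies $i(xy)=i(x)i(y)$. Everything else is a formal unwinding of the definitions, so the main effort is simply to keep the bookkeeping of the two nested finite sums straight.
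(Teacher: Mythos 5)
Your proof is correct and follows the same route as the paper, whose entire proof of this corollary is the single line ``It follows immediately from Proposition 1.3.'' You have simply made explicit the routine verifications (multiplicativity and unitality of $i_*$, using the finite-type hypothesis to pass $i$ through the convolution sum, and compatibility with the module structure of Proposition 1.3) that the paper leaves to the reader.
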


\begin{proof}
 It follows immediately from Proposition $1.3$.
\end{proof}

\begin{prop}
Let $\Gamma' \subset \Gamma$ be a submonoid. Then:
\begin{enumerate}
 \item[(1)] The map $i_*:\mathcal F(\Gamma',R) \rightarrow \mathcal F(\Gamma,R)$, 
       $i_*(\alpha)(n):=\begin{cases} \alpha(n), & n \in \Gamma' \\ 0, & n \notin \Gamma'\end{cases}$,
       is an injective morphism of $R$-algebras, hence $\mathcal F(\Gamma,R)$ has a structure of $\mathcal F(\Gamma',R)$-algebra.
 \item[(2)] The map $i_*:\mathcal F(\Gamma',M)\rightarrow \mathcal F(\Gamma,M)$, 
       $i_*(\alpha)(n):=\begin{cases} \alpha(n), & n \in \Gamma' \\ 0, & n \notin \Gamma' \end{cases}$,
       is an injective morphism of $\mathcal F(\Gamma',R)$-modules.
\end{enumerate}
\end{prop}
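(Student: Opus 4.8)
The plan is to verify directly that $i_*$ respects each piece of structure, establishing (1) first and then deducing (2) from the same computation. Injectivity is immediate in both parts: if $i_*(\alpha)=0$, then $\alpha(n)=i_*(\alpha)(n)=0$ for every $n\in\Gamma'$, so $\alpha=0$. Additivity is equally routine, since extension by zero commutes with pointwise addition. Thus the two substantive points for part (1) are that $i_*$ carries the unit $e_{\Gamma'}$ of $\mathcal F(\Gamma',R)$ to the unit $e_\Gamma$ of $\OGR$, and that it is multiplicative; the final assertion about the algebra structure is then the formal consequence that a ring morphism makes the target an algebra over the source.

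For the unit I would use that a submonoid contains the neutral element, so $1\in\Gamma'$; hence $i_*(e_{\Gamma'})(1)=e_{\Gamma'}(1)=1$ and $i_*(e_{\Gamma'})(n)=0$ for $n\neq 1$, which is exactly $e_\Gamma$. The key step is multiplicativity, $i_*(\alpha\cdot\beta)=i_*(\alpha)\cdot i_*(\beta)$. Fixing $n\in\Gamma$ and expanding the right-hand side,
$$(i_*(\alpha)\cdot i_*(\beta))(n)=\sum_{ab=n}i_*(\alpha)(a)\,i_*(\beta)(b)=\sum_{\substack{ab=n\\ a,b\in\Gamma'}}\alpha(a)\beta(b),$$
since any factor lying outside $\Gamma'$ contributes $0$. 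I would then split into two cases according to whether $n\in\Gamma'$. If $n\in\Gamma'$, the factorizations $ab=n$ with $a,b\in\Gamma'$ are precisely the factorizations of $n$ inside the submonoid $\Gamma'$, so the displayed sum equals $(\alpha\cdot\beta)(n)$ computed in $\mathcal F(\Gamma',R)$, which is $i_*(\alpha\cdot\beta)(n)$. If $n\notin\Gamma'$, then because $\Gamma'$ is closed under the operation no pair $a,b\in\Gamma'$ can satisfy $ab=n$, so the sum is empty and equals $0$, matching $i_*(\alpha\cdot\beta)(n)=0$. This closure argument is the one place where the submonoid hypothesis is genuinely used, and it is the heart of the proof.

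For part (2) I would run the identical computation with $\beta$ replaced by $f\in\mathcal F(\Gamma',M)$ taking values in $M$. The $\mathcal F(\Gamma',R)$-module structure on $\OGM$ is the one obtained by restricting scalars along the algebra morphism of part (1), so the compatibility to check is $i_*(\alpha\cdot f)=i_*(\alpha)\cdot i_*(f)$, and the same two-case analysis yields it verbatim; additivity and injectivity are as before. I expect no genuine obstacle beyond bookkeeping. The single point deserving care is to keep straight that the convolution on the left is formed in $\mathcal F(\Gamma',R)$ (respectively $\mathcal F(\Gamma',M)$) while the one on the right is formed in $\OGR$ (respectively $\OGM$), these agreeing exactly because of the submonoid closure property exploited above.
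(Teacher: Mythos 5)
Your proof is correct and follows essentially the same route as the paper: direct pointwise verification, with multiplicativity established by the same two-case split on $n\in\Gamma'$ versus $n\notin\Gamma'$, where the submonoid closure property forces the convolution sum to vanish in the second case, and part (2) obtained by repeating the computation with $f\in\mathcal F(\Gamma',M)$. Your explicit checks of injectivity and unit preservation are points the paper leaves implicit, but they do not change the substance of the argument.
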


\begin{proof}
$(1)$ Let $\alpha,\beta\in\mathcal F(\Gamma',R)$. Then 
$$i_*(\alpha+\beta)(n) = \begin{cases} (\alpha+\beta)(n), & n \in \Gamma' \\ 0, & n \notin \Gamma'\end{cases} = i_*(\alpha)(n)+i_*(\beta)(n), (\forall)n\in\Gamma.$$ 
Let $n\in \Gamma'$. Then
$$i_*(\alpha\cdot\beta)(n) = (\alpha\cdot\beta)(n) = \sum_{ab=n,\;a,b\in\Gamma'}\alpha(a)\beta(b) = 
\sum_{ab=n,\;a,b\in\Gamma'}i_*(\alpha)(a)i_*(\beta)(b) =  $$
$$ = \sum_{ab=n,\;a,b\in\Gamma}i_*(\alpha)(a)i_*(\beta)(b) =  (i_*(\alpha)\cdot i_*(\beta))(n)$$
Assume $n\in \Gamma\setminus\Gamma'$. It follows that if $n=ab$ with $a,b\in \Gamma$ then $a\notin\Gamma'$ or $b\notin\Gamma'$.
Therefore $$(i_*(\alpha)\cdot i_*(\beta))(n) = \sum_{ab=n}i_*(\alpha)(a)i_*(\beta)(b) = 0 = i_*(\alpha\cdot \beta)(n).$$
Also, for $r\in R$ and $n\in\Gamma$, it is clear that $i_*(r\cdot \alpha)(n)=ri_*(\alpha)(n)$.

$(2)$ The proof is similar to the proof or $(1)$.
\end{proof}

A function $L \in\mathcal F(\Gamma,R)$ is called \emph{totally multiplicative}, if it is a monoid morphism between $(\Gamma,\cdot)$ and $(R,\cdot)$, i.e. 
$$L(ab)=L(a)L(b),(\forall)a,b\in\Gamma \text{ and } L(1)=1.$$
We denote $\mathcal T(\Gamma,R)$ the set of totally multiplicative functions. 

\begin{prop}
Let $L\in \mathcal T(\Gamma,R)$ and let $M$ be an $R$-module. We have that:
\begin{enumerate}
\item[(1)] $\Phi_{L,R}:\mathcal F(\Gamma,R) \rightarrow \mathcal F(\Gamma,R),\;\Phi_{L,R}(\alpha)(n):=L(n)\alpha(n)$,
      is a morphism of $R$-algebras. 
\item[(2)] $\mathcal F(\Gamma,M)$ has a structure of a $\mathcal F(\Gamma,R)$-module given by
           $\alpha\cdot_{L} f := \Phi_{L,R}(\alpha)\cdot f$. We denote this structure by $\mathcal F_{L}(\Gamma,M)$.
\item[(3)] $\Phi_{L,M}:\mathcal F(\Gamma,M) \rightarrow \mathcal F_{L}(\Gamma,M),\;\Phi_{L,M}(f)(n):=L(n)f(n)$
           is a morphism of $\mathcal F(\Gamma,R)$-modules.
\end{enumerate}
\end{prop}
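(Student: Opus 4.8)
The plan is to handle the three parts in order, isolating in each case the single algebraic fact that drives the argument: total multiplicativity of $L$ permits the scalar $L(n)$ to be split across any Dirichlet convolution evaluated at $n$, because every factorization $n=ab$ gives $L(n)=L(ab)=L(a)L(b)$.

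For (1), I would first record the easy facts, namely that $\Phi_{L,R}$ is additive, that it fixes the image of $R$ under $i$ (since $L(1)i(r)(1)=r$ and $L(n)i(r)(n)=0$ for $n\neq 1$, so in particular $\Phi_{L,R}(e)=e$), and that these reduce the problem to multiplicativity. The core computation is then, for $\alpha,\beta\in\OGR$ and $n\in\Gamma$,
$$\Phi_{L,R}(\alpha\cdot\beta)(n)=L(n)\sum_{ab=n}\alpha(a)\beta(b)=\sum_{ab=n}L(a)L(b)\alpha(a)\beta(b),$$
where the second equality applies $L(n)=L(a)L(b)$ inside each term of the finite sum (finiteness coming from the finite-type hypothesis on $\Gamma$). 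Regrouping as $\sum_{ab=n}(L(a)\alpha(a))(L(b)\beta(b))$ identifies this with $(\Phi_{L,R}(\alpha)\cdot\Phi_{L,R}(\beta))(n)$. This is the heart of the whole proposition.

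For (2), the cleanest route is restriction of scalars. Since $\Phi_{L,R}$ is a ring endomorphism of $\OGR$ by (1), and $\OGM$ already carries an $\OGR$-module structure by Proposition 1.3, the rule $\alpha\cdot_L f:=\Phi_{L,R}(\alpha)\cdot f$ automatically yields a new $\OGR$-module structure. Concretely, the associativity $(\alpha\cdot\beta)\cdot_L f=\alpha\cdot_L(\beta\cdot_L f)$ follows from multiplicativity of $\Phi_{L,R}$, the unit axiom $e\cdot_L f=f$ from $\Phi_{L,R}(e)=e$, and the distributive laws from additivity of $\Phi_{L,R}$ combined with the module axioms already available for the original action; I would simply note this rather than re-verify each axiom.

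For (3), I would check additivity of $\Phi_{L,M}$ (immediate) and then the module-morphism condition $\Phi_{L,M}(\alpha\cdot f)=\alpha\cdot_L\Phi_{L,M}(f)$. Evaluating the left-hand side at $n$ and again splitting $L(n)=L(a)L(b)$ across the convolution gives
$$L(n)\sum_{ab=n}\alpha(a)f(b)=\sum_{ab=n}(L(a)\alpha(a))(L(b)f(b))=\sum_{ab=n}\Phi_{L,R}(\alpha)(a)\,\Phi_{L,M}(f)(b),$$
which is exactly $(\Phi_{L,R}(\alpha)\cdot\Phi_{L,M}(f))(n)=(\alpha\cdot_L\Phi_{L,M}(f))(n)$ by the definition of the twisted action. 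I do not expect a genuine obstacle, since there are no convergence or finiteness subtleties beyond the finite-type hypothesis on $\Gamma$; the only point requiring care is keeping straight, in part (3), which side carries the ordinary action and which carries $\cdot_L$.
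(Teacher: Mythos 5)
Your proof is correct and follows essentially the same route as the paper: in all three parts the driving computation is identical, namely splitting $L(n)=L(a)L(b)$ across each factorization $n=ab$ in the convolution, and your verifications in (1) and (3) match the paper's line for line. The one difference is in part (2), where you invoke restriction of scalars along the ring endomorphism $\Phi_{L,R}$ rather than checking the four module axioms by hand as the paper does; this is a valid shortcut, since the paper's axiom-by-axiom verification (associativity from multiplicativity of $\Phi_{L,R}$, unit axiom from $\Phi_{L,R}(e)=e$, distributivity from additivity) is precisely the proof of that general principle in this instance.
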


\begin{proof}
$(1)$ Let $\alpha,\beta \in \mathcal F(\Gamma,R)$. Then, for any $n\in \Gamma$, we have
$$\Phi_{L,R}(\alpha+\beta)(n) = L(n)(\alpha+\beta)(n) = L(n)\alpha(n) + L(n)\beta(n) = \Phi_{L,R}(\alpha)(n) + \Phi_{L,R}(\beta)(n),$$
hence $\Phi_{L,M}(\alpha+\beta)=\Phi_{L,M}(\alpha)+\Phi_{L,M}(\beta)$. Also,
$$\Phi_{L,R}(\alpha\cdot\beta)(n) = L(n)(\alpha\cdot\beta)(n) = L(n)\sum_{ab=n}\alpha(a)\beta(b) = 
\sum_{ab=n} L(a)\alpha(a) L(b)\beta(b) = $$
$$ = \sum_{ab=n} \Phi_{L,R}(\alpha)(a) \Phi_{L,R}(\beta)(b) 
= (\Phi_{L,R}(\alpha)\cdot \Phi_{L,R}(\beta))(n),$$
hence $\Phi_{L,M}(\alpha\cdot \beta)=\Phi_{L,M}(\alpha)\cdot \Phi_{L,M}(\beta)$. Moreover, if $r\in R$, then
$$ \Phi_{L,R}(r\cdot \alpha)(n) = L(n)r\alpha(n) = r(L(n)\alpha(n)) = r\Phi_{L,R}(\alpha)(n).$$
Finally, if $e\in\mathcal F(\Gamma,R)$ then $\Phi_{L,R}(e)(1)=L(1)e(1) = 1\cdot 1 = 1$ and $\Phi_{L,R}(e)(n) = L(n)e(n) =
L(n)\cdot 0 = 0$, hence $\Phi_{L,R}(e)=e$.

$(2)$ Let $\alpha,\beta \in \mathcal F_{L}(\Gamma,R)$ and $f,g\in\mathcal F(\Gamma,M)$. Then, for any $n\in \Gamma$, we have:
$$ \alpha\cdot_{L} (f+g) = \Phi_{L,R}(\alpha)(f+g) =  \Phi_{L,R}(\alpha)\cdot f + \Phi_{L,R}(\alpha)\cdot g = \alpha\cdot_{L} f + \alpha \cdot_{L} g,$$
$$ (\alpha+\beta)\cdot_{L} f = \Phi_{L,R}(\alpha+\beta)\cdot f = \Phi_{L,R}(\alpha)\cdot f +
\Phi_{L,R}(\beta)\cdot f =  \alpha\cdot_{L} f +  \beta\cdot_{L} f,$$
$$ \alpha\cdot_{L} (\beta \cdot_{L} f) = \Phi_{L,R}(\alpha)\cdot (\Phi_{L,R}(\beta)\cdot f) 
 = (\Phi_{L,R}(\alpha)\cdot \Phi_{L,R}(\beta))\cdot f =  (\Phi_{L,R}(\alpha \cdot \beta))\cdot f 
= (\alpha \cdot \beta)\cdot_{L} f,$$
$$ e\cdot_{L} f =  \Phi_{L,R}(e)\cdot f = e \cdot f = f.$$
$(3)$ Let $f,g \in \mathcal F(\Gamma,M)$. Then, for any $n\in \Gamma$, we have
$$\Phi_{L,M}(f+g)(n) = L(n)(f+g)(n) = L(n)f(n) + L(n)g(n) = \Phi_{L,M}(f)(n) + \Phi_{L,M}(g)(n),$$
hence $\Phi_{L,M}(f+g)=\Phi_{L,M}(f)+\Phi_{L,M}(g)$. Let $\alpha\in \mathcal F(\Gamma,R)$ and $n\in \Gamma$. Then:
$$\Phi_{L,M}(\alpha\cdot f)(n) = L(n)(\alpha\cdot f)(n) = \sum_{ab=n}L(n)\alpha(a)f(b) = 
   \sum_{ab=n}L(a)L(b)\alpha(a)f(b) =  $$
$$ = \sum_{ab=n} \Phi_{L,R}(\alpha)(a) \Phi_{L,M}(f)(b) =  (\Phi_{L,R}(\alpha)\cdot \Phi_{L,M}(f))(n)
= (\alpha \cdot_{L} \Phi_{L,M}(f))(n), $$
hence $\Phi_{L,M}(\alpha\cdot f)=\alpha_{L}\cdot \Phi_{L,M}(f)$.
\end{proof}

We recall some basic facts on derivations and K\"ahler differentials, see \cite[\S 20]{grot} for further details.
Let $A$ be a commutative ring with unity and let $i:A\rightarrow R$ be a morphism of rings with unity. 
Let $M$ be an $R$-module. An $A$-derivation $D:R \rightarrow M$ is an $A$-linear map, satisfying the Leibniz rule, i.e. 
$$D(fg)=fD(g)+gD(f), (\forall)f,g\in R.$$
The set of $A$-derivations $\Der_A(R,M)$ has a natural structure of a $R$-module. We write $\Der_A(R)$ for $\Der_A(R,R)$.

\begin{prop}
Let $D\in \Der_A(R,M)$ and let $\delta:\Gamma \rightarrow (M,+)$ be a morphism of monoids. Then
$$\widetilde D:\mathcal F(\Gamma,R) \rightarrow \mathcal F(\Gamma,M),\; \widetilde D(\alpha)(n)=D(\alpha(n))+\alpha(n)\delta(n),\;(\forall)n\in\Gamma,$$
is an $A$-derivation. Consequently, we have a natural map
$$ \Der_A(R,M)\otimes_R Hom_R(\Gamma,M) \rightarrow \Der_A(\mathcal F(\Gamma,R),\mathcal F(\Gamma,M)), D\otimes \delta \mapsto \widetilde D,$$
where $Hom_R(\Gamma,M)$ has the natural structure of a $R$-module given by $(r\cdot \delta)(n) = r\delta(n)$, for all $r\in R$, $\delta\in Hom_R(\Gamma,R)$ and $n\in\Gamma$.
\end{prop}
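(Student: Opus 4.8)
The plan is to check directly that $\widetilde D$ satisfies the two defining properties of an $A$-derivation, namely $A$-linearity and the Leibniz rule, verifying each pointwise at $n\in\Gamma$, where by Remark $1.2$ and Proposition $1.3$ all the relevant operations are explicit convolutions. For the $A$-module structures, recall that $a\in A$ acts through the structure maps $A\to R\hookrightarrow\OGR$, so that $(a\cdot\alpha)(n)=i(a)\alpha(n)$ with $i\colon A\to R$ the given morphism, and similarly on $\OGM$. Granting this, the additivity of $\widetilde D$ and the identity $\widetilde D(a\cdot\alpha)=a\cdot\widetilde D(\alpha)$ reduce at each fixed $n$ to the additivity and the $A$-linearity of $D$, the term $\alpha(n)\delta(n)$ being manifestly additive and $A$-linear in $\alpha(n)$; this settles $A$-linearity.

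The heart of the proof is the Leibniz rule, and the key point is that the additivity of the monoid morphism $\delta$ dovetails with the convolution product. First I would expand $\widetilde D(\alpha\cdot\beta)(n)=D\big(\sum_{ab=n}\alpha(a)\beta(b)\big)+\big(\sum_{ab=n}\alpha(a)\beta(b)\big)\delta(n)$, pull the sum out of $D$ by additivity, and apply the Leibniz rule of $D$ to each term, $D(\alpha(a)\beta(b))=\alpha(a)D(\beta(b))+\beta(b)D(\alpha(a))$. The decisive move is to substitute $\delta(n)=\delta(ab)=\delta(a)+\delta(b)$ in the second sum, so that $\alpha(a)\beta(b)\delta(n)$ splits as $\alpha(a)\big(\beta(b)\delta(b)\big)+\beta(b)\big(\alpha(a)\delta(a)\big)$. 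Regrouping the four terms yields $\widetilde D(\alpha\cdot\beta)(n)=\sum_{ab=n}\big(\alpha(a)\widetilde D(\beta)(b)+\beta(b)\widetilde D(\alpha)(a)\big)$, which is exactly $(\alpha\cdot\widetilde D(\beta))(n)+(\beta\cdot\widetilde D(\alpha))(n)$ by the definition of the $\OGR$-module structure; hence $\widetilde D\in\Der_A(\OGR,\OGM)$.

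For the final assertion I would separate the two contributions. Put $\widehat D(\alpha)(n):=D(\alpha(n))$ and $E_\delta(\alpha)(n):=\alpha(n)\delta(n)$; the computation above, read with $\delta=0$ and with $D=0$ respectively, shows that $\widehat D$ and $E_\delta$ are each $A$-derivations and that $\widetilde D=\widehat D+E_\delta$. Moreover $D\mapsto\widehat D$ is $R$-linear, and $\delta\mapsto E_\delta$ is $R$-linear for the module structure on $\Hom_R(\Gamma,M)$ given in the statement, since the induced $R$-action on the target satisfies $(r\cdot E)(\alpha)(n)=rE(\alpha)(n)$. Thus one gets two natural $R$-linear maps into $\Der_A(\OGR,\OGM)$ whose sum sends $(D,\delta)$ to $\widetilde D$.

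The step needing the most care is whether $D\otimes\delta\mapsto\widetilde D$ descends to the tensor product $\Der_A(R,M)\otimes_R\Hom_R(\Gamma,M)$. This would require the assignment $(D,\delta)\mapsto\widetilde D$ to be $R$-balanced, i.e. $\widetilde{(rD,\delta)}=\widetilde{(D,r\delta)}$; but these evaluate at $n$ to $rD(\alpha(n))+\alpha(n)\delta(n)$ and $D(\alpha(n))+r\alpha(n)\delta(n)$, which differ because the two summands carry the scalar differently. So the naive assignment is not biadditive and does not factor through $\otimes_R$. I would therefore phrase the consequence through the direct sum: the $R$-linear map $\Der_A(R,M)\oplus\Hom_R(\Gamma,M)\to\Der_A(\OGR,\OGM)$, $(D,\delta)\mapsto\widehat D+E_\delta$, is the robust formulation, and establishing it amounts to the two linearity checks of the previous paragraph. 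This balancing issue is the one genuine obstacle; everything else is a direct pointwise verification.
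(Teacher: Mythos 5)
Your verification of the derivation property itself is correct and coincides with the paper's own proof: the paper likewise checks additivity and $A$-linearity pointwise and obtains the Leibniz rule by expanding the convolution, applying additivity and the Leibniz rule of $D$, and splitting $\delta(n)=\delta(a)+\delta(b)$ over each factorization $ab=n$. Where you go beyond the paper is the ``consequently'' clause: the paper's proof stops after the Leibniz computation and never addresses the asserted map out of the tensor product. Your objection there is correct and pinpoints a genuine defect in the statement as written. The assignment $(D,\delta)\mapsto\widetilde D$ is not $R$-balanced, since $(rD,\delta)$ yields $\alpha\mapsto rD(\alpha(n))+\alpha(n)\delta(n)$ while $(D,r\delta)$ yields $\alpha\mapsto D(\alpha(n))+r\,\alpha(n)\delta(n)$; in fact it is not even biadditive, since replacing $D$ by $D_1+D_2$ at fixed $\delta$ produces the term $\alpha(n)\delta(n)$ once, whereas adding the two associated derivations produces it twice. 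Hence no map on $\Der_A(R,M)\otimes_R \Hom_R(\Gamma,M)$ is induced by this formula. Your repair --- decomposing $\widetilde D=\widehat D+E_\delta$ with $\widehat D(\alpha)(n)=D(\alpha(n))$ and $E_\delta(\alpha)(n)=\alpha(n)\delta(n)$, checking that each summand is an $A$-derivation (your Leibniz computation read with $\delta=0$, respectively $D=0$), and packaging the construction as an $R$-linear map on the direct sum $\Der_A(R,M)\oplus \Hom_R(\Gamma,M)$ --- is the correct formulation of what the paper presumably intended, and the two linearity checks it requires do go through. So: on the proposition proper your argument matches the paper; on the final claim you supply what the paper omits and expose an error in how that claim is phrased.
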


\begin{proof}
Let $\alpha,\beta\in \mathcal F(\Gamma,R)$. We have 
$$\widetilde D(\alpha+\beta)(n) = D((\alpha+\beta)(n)) + (\alpha+\beta)(n)\delta(n) = D(\alpha(n)+\beta(n)) + \alpha(n)\delta(n) + \beta(n)\delta(n) = $$
$$ = D(\alpha(n))+\alpha(n)\delta(n) + D(\beta(n))+\beta(n)\delta(n)  = \widetilde D (\alpha)(n)+ \widetilde D (\beta)(n),\;(\forall)n\in\Gamma.$$
Let $r\in A$ and $\alpha\in \mathcal F(\Gamma,R)$. We denote also by $r$ the image $i(r)\in R$. Then:
$$ \widetilde D(r \cdot \alpha)(n) = D((r\cdot \alpha)(n))+(r\cdot \alpha)(n)\delta(n) = D(r\cdot \alpha(n)) + r\alpha(n)\delta(n) = $$
$$ =r(D(\alpha(n))+\alpha(n)\delta(n))=r\widetilde D(\alpha)(n),(\forall)n\in \Gamma.$$
It follows that $D$ is $A$-linear. On the other hand,
$$\widetilde D(\alpha\cdot \beta)(n) = D((\alpha\cdot\beta)(n)) + (\alpha\cdot\beta)(n)\delta(n) = D(\sum_{ab=n}\alpha(a)\beta(b)) + \sum_{ab=n}\alpha(a)\beta(b)(\delta(a)+\delta(b)) = $$
$$ = \sum_{ab=n}D(\alpha(a))\beta(b) + \sum_{ab=n}\alpha(a)\beta(b)\delta(a) + 
       \sum_{ab=n}\alpha(a)D(\beta(b)) + \sum_{ab=n}\alpha(a)\beta(b)\delta(b) = $$
$$= (D(\alpha)\cdot \beta)(n) + (\alpha\cdot D(\beta))(n),\;\text{as required.}$$
\end{proof}

\section{The ring of arithmetic functions}

In this section, $(\Gamma,\cdot)$ is a submonoid in $(\mathbb N^*,\cdot)$. Let $R$ be a domain. 
We define $N:\OGR \rightarrow \mathbb N$, by setting 
$N(0)=0\;\text{ and } N(\alpha)=\min\{n\;|\;\alpha(n)\neq 0\}, \; 0 \neq \alpha\in\OGR.$

\begin{prop}(see \cite{cash})
$(\OGR,+,\cdot)$ is a domain and $N$ is a non-archimedian norm on $\OGR$, i.e.
\begin{enumerate}
\item $N(\alpha)\geq 0$, $(\forall)\alpha\in \OGR$. $N(\alpha)=0 \Longleftrightarrow \alpha=o$.
\item $N(\alpha+\beta)\leq \min\{ N(\alpha), N(\beta)\}$, $(\forall)\alpha,\beta\in \OGR$ and the equality holds if $N(\alpha)\neq N(\beta)$.
\item $N(\alpha\cdot \beta)=N(\alpha)N(\beta)$, $(\forall)\alpha,\beta\in \OGR$.
\end{enumerate}
\end{prop}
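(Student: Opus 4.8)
The plan is to treat $N$ as a (non-archimedean) valuation and to deduce the domain property as a consequence of its multiplicativity, rather than proving it separately. Property (1) is immediate: by definition $N(0)=0$, while for $\alpha\neq 0$ the set $\{n\in\Gamma\;:\;\alpha(n)\neq 0\}$ is a nonempty subset of $\Gamma\subseteq\mathbb N^*=\{1,2,3,\ldots\}$, so by the well-ordering of $\mathbb N$ it has a least element, which is $\geq 1>0$; hence $N(\alpha)=0$ forces $\alpha=0$.

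For the additive property (2), the cases where $\alpha$ or $\beta$ vanish are trivial, so I would assume both nonzero and set $a=N(\alpha)$, $b=N(\beta)$. For every $n<\min\{a,b\}$ we have $\alpha(n)=\beta(n)=0$, hence $(\alpha+\beta)(n)=0$; this gives the ultrametric inequality $N(\alpha+\beta)\geq\min\{a,b\}$. If moreover $a\neq b$, say $a<b$, then at the index $n=a$ we get $(\alpha+\beta)(a)=\alpha(a)+0=\alpha(a)\neq 0$, so the bound is attained and $N(\alpha+\beta)=a=\min\{a,b\}$, i.e. equality holds precisely when $N(\alpha)\neq N(\beta)$.

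The heart of the argument is the multiplicative property (3), and this is where both the hypothesis that $R$ is a domain and the fact that $\Gamma$ sits inside $(\mathbb N^*,\cdot)$ enter. Again assume $\alpha,\beta\neq 0$ and put $a=N(\alpha)$, $b=N(\beta)$. I would first establish the lower bound $N(\alpha\beta)\geq ab$: any nonzero summand in $(\alpha\beta)(n)=\sum_{cd=n}\alpha(c)\beta(d)$ forces $\alpha(c)\neq 0$ and $\beta(d)\neq 0$ (here $R$ being a domain is used), hence $c\geq a$ and $d\geq b$, so $n=cd\geq ab$; thus $(\alpha\beta)(n)=0$ for all $n<ab$. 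For the reverse inequality I would compute the coefficient at $n=ab$: if $cd=ab$ with $c\geq a$ and $d\geq b$, then from $c\geq a$ and $cd=ab$ in $\mathbb N^*$ we get $d=ab/c\leq b$, whence $d=b$ and $c=a$; therefore the only possibly nonzero summand is the one with $c=a$, $d=b$, and $(\alpha\beta)(ab)=\alpha(a)\beta(b)$. Since $\alpha(a),\beta(b)\neq 0$ and $R$ has no zero divisors, this coefficient is nonzero, so $N(\alpha\beta)=ab=N(\alpha)N(\beta)$.

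Finally, the domain property falls out for free: if $\alpha,\beta\neq 0$ then $N(\alpha),N(\beta)\geq 1$, so by (3) we have $N(\alpha\beta)=N(\alpha)N(\beta)\geq 1$, which by (1) means $\alpha\beta\neq 0$. The only delicate point in the whole proof is the equality half of (3) --- guaranteeing that the minimal-index coefficient $\alpha(a)\beta(b)$ of the product does not vanish and that no other factorization of $ab$ contributes --- and both of these rest on $R$ being a domain together with the uniqueness of the minimal factorization forced by the positivity and ordering of $\mathbb N^*$. If one wished to replace the ambient monoid $\mathbb N^*$ by a general cancellative monoid of finite type, this is exactly the step that would demand an additional ordering hypothesis.
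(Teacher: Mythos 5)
Your proof is correct and is essentially the paper's own argument: the paper's proof consists precisely of the computation you give for (3) --- every coefficient of $\alpha\cdot\beta$ at an index $n<ab$ vanishes, while $(\alpha\cdot\beta)(ab)=\alpha(a)\beta(b)\neq 0$ because $R$ is a domain --- with the domain property of $\mathcal F(\Gamma,R)$ read off from this, the only difference being that you spell out the step the paper calls ``clear'' (the uniqueness of the factorization $cd=ab$ with $c\geq a$, $d\geq b$, forced by the ordering of $\mathbb N^*$) and that you also write out (1) and (2), which the paper leaves unproved. Two cosmetic remarks: in (2) you prove $N(\alpha+\beta)\geq\min\{N(\alpha),N(\beta)\}$, which is the correct (standard ultrametric) inequality, the $\leq$ in the statement being evidently a typo; and your parenthetical that the lower bound $N(\alpha\cdot\beta)\geq ab$ uses the domain hypothesis is unnecessary --- that hypothesis is needed only to conclude $\alpha(a)\beta(b)\neq 0$.
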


\begin{proof}
Let $\alpha,\beta\in \OGR\setminus \{0\}$ and assume that $N(\alpha)=a$
and $N(\beta)=b$. Then it is clear that $(\alpha\cdot\beta)(n)=0$ for all (if any) $n\in \Gamma$ with $n<ab$ and $(\alpha\cdot\beta)(ab)=\alpha(a)\beta(b)\neq 0$. 
Hence $N(\alpha\cdot\beta)=N(\alpha)N(\beta)$ and therefore $\alpha\cdot\beta\neq 0$.
\end{proof}

\begin{prop}(see also \cite[Theorem 6]{ber})
The group of units of $\OGR$ is
$$U(\OGR)=\{\alpha\in \OGR\;:\; \alpha(1)\in U(R) \},$$
where $U(R)$ is the group of units of $R$.
\end{prop}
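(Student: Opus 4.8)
The plan is to characterize exactly when a Dirichlet convolution $\alpha \in \OGR$ has a two-sided inverse, which (since $\OGR$ is commutative) amounts to finding $\beta$ with $\alpha \cdot \beta = e$. The key observation is that evaluating the convolution at $n=1$ forces $\alpha(1)\beta(1) = e(1) = 1$, so $\alpha(1)$ must be a unit in $R$; this gives one inclusion immediately. For the converse I would construct the inverse recursively, exploiting the finite-type hypothesis and the multiplicative order structure on $\Gamma$.

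First I would establish the necessary condition. If $\alpha \cdot \beta = e$, then reading off the value at $1$ and using that $1 = ab$ forces $a=b=1$ in a submonoid of $(\mathbb N^*,\cdot)$, we get $(\alpha\cdot\beta)(1)=\alpha(1)\beta(1)=1$, hence $\alpha(1)\in U(R)$. For the sufficiency, suppose $\alpha(1) = u \in U(R)$ with inverse $u^{-1}$. I would build $\beta$ inductively on $n \in \Gamma$, ordered by the natural order inherited from $\mathbb N^*$. Set $\beta(1) := u^{-1}$. For $n > 1$, the relation $(\alpha\cdot\beta)(n) = \sum_{ab=n}\alpha(a)\beta(b) = 0$ separates off the term $a=1$, $b=n$, yielding
$$
\alpha(1)\beta(n) + \sum_{\substack{ab=n \\ a>1}} \alpha(a)\beta(b) = 0,
$$
so that
$$
\beta(n) := -u^{-1}\sum_{\substack{ab=n \\ a>1}} \alpha(a)\beta(b).
$$
The sum on the right involves only values $\beta(b)$ with $b < n$ (since $a>1$ forces $b<n$), and it is a finite sum by the finite-type assumption, so $\beta(n)$ is well defined by strong induction.

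The main point requiring care—and the step I expect to be the chief obstacle—is verifying that this recursively defined $\beta$ actually lies in $\OGR$ and is genuinely a two-sided inverse rather than merely a formal left inverse. Since $\OGR$ is a commutative ring by Proposition $1.1$, once I exhibit $\beta$ with $\alpha\cdot\beta = e$, commutativity gives $\beta\cdot\alpha = e$ automatically, so no separate right-inverse computation is needed. The well-definedness is guaranteed precisely because at each $n$ the defining sum ranges over the finite set $\{(a,b)\in\Gamma\times\Gamma : ab=n,\ a>1\}$, and every $b$ appearing satisfies $b<n$; this is where the finite-type hypothesis and the strict inequality $b<n$ are both essential. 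I would then note that the construction shows the inverse is unique, completing the proof that $U(\OGR) = \{\alpha : \alpha(1)\in U(R)\}$.

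One subtlety worth flagging is that the argument uses the well-ordering of the values of $n$ in $\Gamma \subset \mathbb N^*$ under the usual ordering of integers, which legitimizes the strong induction; this is cleaner than trying to argue via the norm $N$ of Proposition $2.1$, though one could alternatively phrase the recursion in terms of increasing $N$-value. I expect the verification that the resulting $\beta$ satisfies $(\alpha\cdot\beta)(n)=0$ for all $n>1$ to be essentially immediate from the defining recursion, so the real content is setting up the induction correctly and confirming each step is a finite computation.
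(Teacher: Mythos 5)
Your proof is correct and is essentially the paper's own argument: necessity by evaluating $\alpha\cdot\beta=e$ at $n=1$, and sufficiency by the same recursive construction $\beta(1)=\alpha(1)^{-1}$, $\beta(n)=-\alpha(1)^{-1}\sum\alpha(a)\beta(b)$, where your condition $a>1$ on the sum is equivalent to the paper's condition $b\neq n$ since $\Gamma\subset(\mathbb N^*,\cdot)$ is cancellative. The strong-induction and finiteness justifications you spell out are precisely what the paper compresses into ``it is easy to check that $\alpha\cdot\beta=e$.''
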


\begin{proof}
Assume $\alpha$ is an unit in $\OGR$, i.e. there exists $\beta\in \OGR$ such that $\alpha\cdot\beta=e$. It follows that
$(\alpha\cdot\beta)(1)=\alpha(1)\beta(1)=e(1)=1$, hence $\alpha(1)\in U(R)$. For the converse, let $\alpha\in \OGR$ with
$\alpha(1)\in U(R)$. We define
$$\beta(1):=\alpha(1)^{-1},\; \beta(n):=-\alpha(1)^{-1} \sum_{ab=n,\; b\neq n} \alpha(a)\beta(b), (\forall)n\in \Gamma\setminus\{1\}.$$
It is easy to check that $\alpha\cdot \beta=e$.
\end{proof}

\begin{obs}
\emph{Let  $i:R \rightarrow \OGR$ defined by $i(r)(1)=r$ and $i(r)(n)=0$, $(\forall)n\geq 2$. The map $\pi:\OGR \rightarrow R$, $\pi(\alpha):=\alpha(1)$ is a ring epimorphism. 
One has  $\pi\circ i = Id_R$,  $\Ker \Phi = \{\alpha\in\OGR,\;\alpha(1)=0\}$ and $\OGR/\Ker\Phi \cong R$.
Moreover, according to Proposition $1.2$, we have $U(\OGR)=\Phi^{-1}(U(R))$. See also \cite[Remark 1]{ber}.}
\end{obs}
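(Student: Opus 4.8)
The plan is to verify directly that $\pi$ is a surjective ring homomorphism and then to read off every remaining assertion as a formal consequence. First I would check additivity, which is immediate from the definition of the sum in $\OGR$: one has $\pi(\alpha+\beta)=(\alpha+\beta)(1)=\alpha(1)+\beta(1)=\pi(\alpha)+\pi(\beta)$. For multiplicativity I would compute $\pi(\alpha\cdot\beta)=(\alpha\cdot\beta)(1)=\sum_{ab=1}\alpha(a)\beta(b)$ and then observe that, since $\Gamma$ is a submonoid of $(\mathbb N^*,\cdot)$, the only factorization of $1$ in $\Gamma$ is $1=1\cdot 1$; hence the sum collapses to the single term $\alpha(1)\beta(1)=\pi(\alpha)\pi(\beta)$. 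Since also $\pi(e)=e(1)=1$, this shows $\pi$ is a morphism of rings with unity.

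Next I would establish the relation $\pi\circ i=\mathrm{Id}_R$ and surjectivity simultaneously: for $r\in R$ one has $(\pi\circ i)(r)=i(r)(1)=r$, so $\pi$ is a retraction of the monomorphism $i$ of Remark $1.2$ and, in particular, is onto. This gives the epimorphism claim at no extra cost.

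The three remaining statements are then purely formal. By definition of $\pi$ we have $\Ker\pi=\{\alpha\in\OGR : \alpha(1)=0\}$, and applying the first isomorphism theorem to the surjection $\pi$ yields $\OGR/\Ker\pi\cong \operatorname{Im}\pi=R$. For the assertion on units I would simply compare $\pi^{-1}(U(R))=\{\alpha\in\OGR : \alpha(1)\in U(R)\}$ with the description of $U(\OGR)$ already obtained in the preceding Proposition on units, which gives exactly the same set; thus $U(\OGR)=\pi^{-1}(U(R))$.

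The only step that is not pure bookkeeping is the multiplicativity computation, and even there the single substantive point is that $1$ is the unique unit of $\Gamma$, so that $1$ admits no nontrivial factorization. It is precisely this feature of $\mathbb N^*$ that makes the convolution sum defining $(\alpha\cdot\beta)(1)$ reduce to one term; in a monoid possessing other units the formula would acquire extra contributions and $\pi$ would in general fail to be multiplicative. Everything else follows from the definitions and the standard isomorphism theorem, so I expect no genuine obstacle.
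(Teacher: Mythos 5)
Your proof is correct and is exactly the routine verification that the paper leaves implicit: the statement appears as a Remark with no proof given, and your computations (collapse of the convolution sum at $1$, the retraction $\pi\circ i=\mathrm{Id}_R$, the first isomorphism theorem, and the comparison with the units proposition $U(\OGR)=\{\alpha:\alpha(1)\in U(R)\}$) are the intended ones. Your closing observation---that multiplicativity of $\pi$ hinges on $1$ admitting only the trivial factorization in $\Gamma$, which holds here because $\Gamma$ is a submonoid of $(\mathbb N^*,\cdot)$, but would fail in a monoid with nontrivial units---is a genuine point the paper glosses over, and it correctly identifies the only non-formal step.
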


In the following, we tackle the case $\Gamma=\mathbb N^*$.
Let $\mathbb P=\{p_1,p_2,p_3,\ldots\}$ be the set of prime numbers. We assume that $p_1<p_2<p_3<\cdots$,
i.e. $p_1=2$, $p_2=3$, $p_3=5$ etc. For any $k\geq 1$, we let $\Gamma(k)$ be the submonoid in $\mathbb N^*$
generated by $\mathcal P_k:=\{p_1,\ldots,p_k\}$, i.e. $$\Gamma(k)=\{p_1^{a_1}\cdots p_k^{a_k}\;:\; a_i\in\mathbb N\}.$$
Note that $(\Gamma(k),\cdot)$ is isomorphic to $(\mathbb N^k,+)$ and 
$$\mathbb N^* = \lim_{\longrightarrow} \Gamma(k) = \bigcup_{k\geq 1}\Gamma(k).$$
The map $$\Phi_k:\mathcal F(\mathbb N^*,R) \rightarrow \mathcal F(\Gamma(k),R), \Phi_k(\alpha)=\alpha|_{\Gamma(k)},$$
is a morphism of $R$-algebras. Also, for any $k\leq k'$, the map 
$$\Phi_{k,k'}:\mathcal F(\Gamma(k'),R) \rightarrow \mathcal F(\Gamma(k),R), \Phi_k(\alpha)=\alpha|_{\Gamma(k)},$$
is a morphism of $R$-algebras. One can easily check that $(\mathcal F(\Gamma(k),R),\Phi_{k,k'})_{k,k'\geq 1}$ is an inverse system
and 
\begin{equation}
\mathcal F(\mathbb N^*,R) = \lim_{\longleftarrow} \mathcal \mathcal F(\Gamma(k),R).
\end{equation}

\begin{prop}
With the above notations, we have the $R$-algebra isomorphisms 
$$\mathcal F(\Gamma(k),R) \cong R[[x_1,\ldots,x_k]],(\forall)k\geq 1, \text{ and } \mathcal F(\mathbb N^*,R) \cong R[[x_1,x_2,x_3,\ldots ]].$$
\end{prop}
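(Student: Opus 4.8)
The plan is to establish the first isomorphism by hand for fixed $k$ and then obtain the second by passing to the inverse limit in $(2.1)$. The starting point is that unique factorization of integers makes $\Gamma(k)$ the free commutative monoid on $p_1,\dots,p_k$: every $n\in\Gamma(k)$ is written uniquely as $n=p_1^{a_1}\cdots p_k^{a_k}$, which gives a monoid isomorphism $\Gamma(k)\cong(\mathbb N^k,+)$. I would therefore define
$$\psi_k:\mathcal F(\Gamma(k),R)\longrightarrow R[[x_1,\dots,x_k]],\qquad \psi_k(\alpha)=\sum_{(a_1,\dots,a_k)\in\mathbb N^k}\alpha(p_1^{a_1}\cdots p_k^{a_k})\,x_1^{a_1}\cdots x_k^{a_k}.$$
This is manifestly a bijection, since it simply records the value table of $\alpha$ as a family of coefficients indexed by $\mathbb N^k$, and it is additive because the sum in $\mathcal F(\Gamma(k),R)$ and the sum of power series are both taken coefficientwise.

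The one substantive point is that $\psi_k$ turns the Dirichlet convolution into the Cauchy product. Fixing $n=p_1^{c_1}\cdots p_k^{c_k}$, a factorization $n=ab$ in $\Gamma(k)$ corresponds under the isomorphism to a decomposition $(c_1,\dots,c_k)=(a_1,\dots,a_k)+(b_1,\dots,b_k)$ in $\mathbb N^k$, so that $(\alpha\cdot\beta)(n)=\sum_{ab=n}\alpha(a)\beta(b)$ is exactly the coefficient of $x_1^{c_1}\cdots x_k^{c_k}$ in $\psi_k(\alpha)\psi_k(\beta)$; hence $\psi_k(\alpha\cdot\beta)=\psi_k(\alpha)\psi_k(\beta)$. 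Since $1=p_1^0\cdots p_k^0$, the unit $e$ maps to the constant series $1$, and $R$-linearity (via the inclusion of Remark $1.2$) is immediate. This yields the $R$-algebra isomorphism $\mathcal F(\Gamma(k),R)\cong R[[x_1,\dots,x_k]]$.

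For the infinite case I would read both sides as inverse limits. Equation $(2.1)$ gives $\mathcal F(\mathbb N^*,R)=\varprojlim_k\mathcal F(\Gamma(k),R)$ with transition maps the restrictions $\Phi_{k,k'}$, and I would interpret $R[[x_1,x_2,\dots]]$ as $\varprojlim_k R[[x_1,\dots,x_k]]$, the transition map $\rho_{k,k'}$ being the substitution $x_{k+1}=\cdots=x_{k'}=0$. The key check is compatibility, $\rho_{k,k'}\circ\psi_{k'}=\psi_k\circ\Phi_{k,k'}$: setting the extra variables to $0$ retains precisely the monomials with $a_{k+1}=\cdots=a_{k'}=0$, whose coefficients $\alpha(p_1^{a_1}\cdots p_k^{a_k})$ are exactly the values of the restriction $\alpha|_{\Gamma(k)}$. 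Thus the $\psi_k$ constitute an isomorphism of the two inverse systems, and passing to the limit yields $\mathcal F(\mathbb N^*,R)\cong R[[x_1,x_2,\dots]]$.

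I expect no genuine difficulty beyond two bookkeeping hazards. First, I would phrase the convolution-to-Cauchy-product identity so that the finiteness of $\{(a,b):ab=n\}$ — the finite-type hypothesis — is explicitly what legitimizes comparing coefficients term by term. Second, I would fix once and for all the convention that $R[[x_1,x_2,\dots]]$ denotes the inverse limit $\varprojlim_k R[[x_1,\dots,x_k]]$, since it is this reading — rather than any naive ``all monomials'' interpretation — that matches $\mathcal F(\mathbb N^*,R)$ and makes the limit argument go through.
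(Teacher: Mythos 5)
Your proof is correct and takes essentially the same route as the paper: the paper defines exactly the same coefficient-recording map (its $\Phi_k$ is your $\psi_k$), declares it ``easy to check'' that it is an $R$-algebra isomorphism, and then concludes the infinite case by passing to the inverse limit via $(2.1)$, identifying $\varprojlim_k R[[x_1,\ldots,x_k]]$ with $R[[x_1,x_2,\ldots]]$. Your write-up simply supplies the details the paper omits, namely the convolution-to-Cauchy-product verification and the compatibility $\rho_{k,k'}\circ\psi_{k'}=\psi_k\circ\Phi_{k,k'}$ of the isomorphisms with the transition maps of the two inverse systems.
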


\begin{proof}
We define $\Phi_k:\mathcal F(\Gamma(k),R) \rightarrow R[[x_1,\ldots,x_k]]$ by 
$$\Phi_k(\alpha)=\sum_{n\in \Gamma(k)}\alpha(n)x_1^{a_{n1}}\cdots x_k^{a_{nk}},\text{ where } n=p_1^{a_{n1}}\cdots p_k^{a_{nk}},$$
for any $\alpha\in \mathcal F(k)$. It is easy to check that $\Phi_k$ is an $R$-algebra isomorphism. By $(2.1)$, it follows that
$$\mathcal F(\mathbb N^*,R) \cong \lim_{\longleftarrow} R[[x_1,\ldots,x_k]] = R[[x_1,x_2,x_3,\ldots ]].$$
\end{proof}

\begin{cor}
Let $(\Gamma,\cdot)$ be a submonoid of $(\mathbb N^*,\cdot)$.
\begin{enumerate}
 \item[(1)] $\OGR \cong R[[x_1,x_2,\ldots]]/I_{\Gamma}$, where $I_{\Gamma}\subset R[[x_1,x_2,\ldots ]]$ is a prime ideal.
 \item[(2)] If $\Gamma$ is finitely generated, then there exists $k\geq 1$ such that $\OGR\cong R[[x_1,\ldots,x_k]]/I_{\Gamma}$, where $I_{\Gamma}$ is a prime ideal.
\end{enumerate}
\end{cor}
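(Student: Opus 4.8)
The plan is to realise $\OGR$ as the quotient of a power series ring by pushing functions forward along a free presentation of the monoid $\Gamma$. Call $a\in\Gamma\setminus\{1\}$ \emph{irreducible} if it is not a product of two elements of $\Gamma\setminus\{1\}$. Since $\Gamma\subseteq\mathbb N^*$, every non-unit of $\Gamma$ is an integer $\geq 2$, so any proper factorization in $\Gamma$ strictly decreases the factors; an easy induction on the value then shows that the set $A$ of irreducibles generates $\Gamma$ as a monoid. Moreover $A\subseteq\mathbb N^*$ is countable, and it is finite precisely when $\Gamma$ is finitely generated (the irreducibles form the unique minimal generating set). Let $F$ be the free commutative monoid on $A$ and $\pi:F\to\Gamma$ the surjective morphism extending $a\mapsto a$. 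When $A$ is infinite we have $F\cong\mathbb N^*$, and when $|A|=k$ we have $F\cong\Gamma(k)\cong(\mathbb N^k,+)$; since $\mathcal F(-,R)$ carries monoid isomorphisms to ring isomorphisms, Proposition $2.4$ yields $R$-algebra isomorphisms $\mathcal F(F,R)\cong R[[x_1,x_2,\ldots]]$ in the first case and $\mathcal F(F,R)\cong R[[x_1,\ldots,x_k]]$ in the second.

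Next I would introduce the pushforward
$$\pi_*:\mathcal F(F,R)\longrightarrow\OGR,\qquad \pi_*(\alpha)(n):=\sum_{\pi(m)=n}\alpha(m)\quad(n\in\Gamma).$$
This is well defined because every fibre $\pi^{-1}(n)$ is finite: an element of $F$ lying over $n$ is a multiset of irreducibles whose product is $n$, hence has at most $\log_2 n$ entries, each a divisor of $n$. The central computation is that $\pi_*$ is a ring homomorphism. Expanding $(\pi_*\alpha\cdot\pi_*\beta)(n)$ gives a sum over pairs $(m',m'')\in F\times F$ with $\pi(m')\pi(m'')=n$, while expanding $\pi_*(\alpha\cdot\beta)(n)$ gives a sum over $m$ with $\pi(m)=n$ together with a factorization $m=m'm''$; the assignment $(m',m'')\mapsto(m=m'm'')$ is a bijection between the two index sets because $\pi(m'm'')=\pi(m')\pi(m'')$, so the two sums coincide. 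As $\pi_*$ is clearly $R$-linear and sends the unit of $\mathcal F(F,R)$ to that of $\OGR$, it is a morphism of $R$-algebras, and this is the step where the finite-type hypothesis is genuinely used.

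Finally I would verify surjectivity and read off primeness. Fix a set-theoretic section $s:\Gamma\to F$ of $\pi$; given $f\in\OGR$, let $\alpha$ be supported on $s(\Gamma)$ with $\alpha(s(n))=f(n)$. Then $s(n)$ is the unique point of $\operatorname{supp}\alpha$ lying over $n$, so $\pi_*(\alpha)=f$, and $\pi_*$ is onto. Hence $\OGR\cong\mathcal F(F,R)/I_\Gamma$ with $I_\Gamma:=\Ker\pi_*$, which after composing with the isomorphism of the first paragraph presents $\OGR$ as $R[[x_1,\ldots,x_k]]/I_\Gamma$ when $\Gamma$ is finitely generated, proving $(2)$; the countably indexed form of $(1)$ follows either from the infinite-atom case or, in the finitely generated case, by factoring through $R[[x_1,x_2,\ldots]]\twoheadrightarrow R[[x_1,\ldots,x_k]]$ so as to absorb the spare variables into $I_\Gamma$. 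Since $R$ is a domain, Proposition $2.1$ says $\OGR$ is a domain, whence its isomorphic copy $\mathcal F(F,R)/I_\Gamma$ is a domain and $I_\Gamma$ is prime. I expect the only delicate point to be surjectivity onto the \emph{full} function ring: the functions $\delta_n$ span only a dense subring, so one really needs the section argument rather than a naive generation claim, while the homomorphism check, though routine, is where the factorization bijection must be handled carefully.
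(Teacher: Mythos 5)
Your proof is correct, and on part (1) it is essentially the paper's own argument carried out rigorously: the paper likewise fixes a generating set $\{n_1,n_2,\ldots\}$ of $\Gamma$, sends $x_i$ to the indicator function of $n_i$, chooses for each $n\in\Gamma$ a monomial $u_n$ lying over it to get surjectivity, and deduces primeness of the kernel from the fact that $\OGR$ is a domain (Proposition $2.1$); your pushforward $\pi_*$ along the free presentation $\pi:F\to\Gamma$ is exactly that map after the identification $\mathcal F(F,R)\cong R[[x_1,x_2,\ldots]]$ of Proposition $2.4$. What you add, and what the paper only gestures at, are the real verifications: fibre-finiteness, the factorization bijection showing $\pi_*$ is multiplicative, and the section argument for surjectivity. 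This care is warranted, because the paper's phrase ``canonical surjective morphism $\alpha\mapsto\alpha|_{\Gamma}$'' is, as literally written, a restriction map, and restriction is \emph{not} a ring homomorphism for convolution (e.g.\ the indicator of $2$ restricts to $0$ on $\Gamma=\langle 4\rangle$ while its convolution square restricts to the indicator of $4$). On part (2) you genuinely diverge from the paper: the paper chooses $k$ so that the prime factors of the generators lie in $\{p_1,\ldots,p_k\}$, notes $\Gamma\subset\Gamma(k)$, and asserts that $\OGR$ is then a quotient of $\mathcal F(\Gamma(k),R)\cong R[[x_1,\ldots,x_k]]$, whereas you take $k$ to be the number of irreducibles of $\Gamma$. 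Your choice is the correct one: a submonoid inclusion $\Gamma\subset\Gamma(k)$ does not by itself yield a surjection $\mathcal F(\Gamma(k),R)\to\OGR$ (again, restriction is not multiplicative), and the paper's conclusion can actually fail with its choice of $k$. For $\Gamma=\langle 4,8\rangle$ one has $k=1$ in the paper's sense, yet $\mathcal F(\Gamma,\mathbb C)\cong\mathbb C[[t^2,t^3]]$ is not a quotient of $\mathbb C[[x_1]]$: the only quotients of the discrete valuation ring $\mathbb C[[x_1]]$ by prime ideals are $\mathbb C[[x_1]]$ and $\mathbb C$, and $\mathbb C[[t^2,t^3]]$ is neither, since its maximal ideal needs two generators. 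So your atom-counting version of (2) not only proves the statement but quietly repairs the one step where the paper's own proof is unsound.
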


\begin{proof}
$(1)$ Assume that $\Gamma$ is generated by $\{n_1,n_2,n_3,\ldots\}$. We define:
$$\Phi: R[[x_1,x_2,\ldots ]] \rightarrow \OGR,\; x_i\mapsto \alpha^i,\; \alpha^i(n)=\begin{cases} 1, & n = n_i \\ 0, & n\neq n_i \end{cases}.$$
If $u=x_1^{a_1}\cdots x_r^{a_r}$, then $\Phi(u)(n)=\begin{cases} 1, & n = n_1^{a_1}\cdots n_r^{a_r} \\ 0, &  \text{ otherwise } \end{cases}$.
Hence, if $f=\sum_{n=1}^{\infty}\alpha(n)u_n$, where $u_n=x_1^{a_{n1}}\cdots x_n^{a_{nr}}$ for $n=p_1^{a_{n1}}\cdots p_r^{a_{nr}}$.

We have the canonical surjective morphism $\mathcal F \rightarrow \OGR$, $\alpha\mapsto \alpha|_{\Gamma}$, hence $\OGR$ can be described as a quotient ring of
$\mathcal F\cong R[[x_1,x_2,\ldots]]$. Since $\OGR$ is a domain, it follows that $\OGR \cong R[[x_1,x_2,\ldots]]/I_{\Gamma}$, 
where $I_{\Gamma}\subset R[[x_1,x_2,\ldots ]]$ is a prime ideal.

$(2)$ Assume that $\Gamma$ is generated, as a monoid, by $n_1,\ldots,n_r$. We chose $k\geq 1$ such that all the prime factors 
of $n_1,\ldots,n_r$ are in the set $\{p_1,p_2,\ldots,p_k\}$. It follows that $\Gamma\subset \Gamma(k)$, hence $\OGR$ is isomorphic to
a quotient ring of $\mathcal F(k)\cong R[[x_1,\ldots,x_k]]$. Since $\mathcal F(k)$ is a domain, it follows that $I_{\Gamma}$ is a prime ideal.
\end{proof}

\begin{obs}\emph{
 Let $(\Gamma,+) \subset (\mathbb N^k,+)$, $k\geq 1$, be a finitely generated submonoid, i.e. $\Gamma$ is an \emph{affine positive monoid}.
 Assume that $\Gamma$ is generated by the vectors $v_1,\ldots,v_r \in \mathbb N^k$. Chosing $p_1,\ldots,p_k$ a set of distinct prime numbers, 
 the map $i:\Gamma \rightarrow \mathbb N^*$, $i(a_1,\ldots,a_k)= p_1^{a_1}\cdots p_r^{a_k}$, is an injective morphism of monoids, hence we can
 see $\Gamma$ as a submonoid in $(\mathbb N^*,\cdot)$.
 Then, according to Corollary $2.5(2)$, the map 
 $$\Phi_{\Gamma}:R[[x_1,\ldots,x_k]] \rightarrow \mathcal F(\Gamma,R),\; \Phi_{\Gamma}(\sum_{(a_1,\ldots,a_k)\in \mathbb N^k}c_{a_1,\ldots,a_k}x_1^{a_1}\cdots x_k^{a_k})(a_1,\ldots,a_k):= c_{a_1,\ldots,a_k},$$
 is a surjective morphism of $R$-algebras, hence $\mathcal F(\Gamma,R)\cong R[[x_1,\ldots,x_k]]/\Ker(\Phi)$. The ideal $I_{\Gamma}:=\Ker(\Phi_{\Gamma})$ is prime and it is called
 the} toric ideal \emph{of $\Gamma$.}
\end{obs}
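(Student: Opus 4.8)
The plan is to deduce the remark from Proposition 2.4 and Corollary 2.5(2) by exhibiting $\Phi_\Gamma$ as a restriction map composed with a known isomorphism. First I would fix $k$ so that every prime dividing one of $i(v_1),\ldots,i(v_r)$ lies in $\mathcal P_k=\{p_1,\ldots,p_k\}$; then $i(\Gamma)$ is a submonoid of $\Gamma(k)=\langle p_1,\ldots,p_k\rangle$, and I identify $\Gamma$ with $i(\Gamma)\subset\Gamma(k)$. Proposition 2.4 supplies the $R$-algebra isomorphism $\Phi_k\colon\mathcal F(\Gamma(k),R)\to R[[x_1,\ldots,x_k]]$, $\alpha\mapsto\sum_{\mathbf a}\alpha(p_1^{a_1}\cdots p_k^{a_k})x_1^{a_1}\cdots x_k^{a_k}$, whose inverse sends $\sum c_{\mathbf a}x^{\mathbf a}$ to the coefficient function $\mathbf a\mapsto c_{\mathbf a}$. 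Writing $\rho\colon\mathcal F(\Gamma(k),R)\to\mathcal F(\Gamma,R)$, $\rho(g)=g|_\Gamma$, for the restriction map, I would then check the identity $\Phi_\Gamma=\rho\circ\Phi_k^{-1}$: for $F=\sum c_{\mathbf a}x^{\mathbf a}$ and $\mathbf a\in\Gamma$ one has $(\rho\circ\Phi_k^{-1})(F)(\mathbf a)=c_{\mathbf a}=\Phi_\Gamma(F)(\mathbf a)$, so the two maps coincide.

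From this factorization the $R$-module statements are immediate: $\rho$ and $\Phi_k^{-1}$ are both $R$-linear, and $\Phi_\Gamma$ is surjective because every $f\in\mathcal F(\Gamma,R)$ is the image of $\sum_{\mathbf a\in\Gamma}f(\mathbf a)x^{\mathbf a}$. Granting that $\Phi_\Gamma$ is moreover multiplicative, Corollary 2.5(2) then applies directly: $\Phi_\Gamma$ becomes a surjective morphism of $R$-algebras, and since $\mathcal F(\Gamma,R)$ is a domain by Proposition 2.1 its kernel $I_\Gamma=\Ker\Phi_\Gamma$, namely the power series whose coefficients vanish on $\Gamma$, is a prime ideal, yielding $\mathcal F(\Gamma,R)\cong R[[x_1,\ldots,x_k]]/I_\Gamma$.

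The step I expect to be the main obstacle is precisely this multiplicativity. Unwinding the two convolutions, $\Phi_\Gamma(F\cdot G)$ and $\Phi_\Gamma(F)\cdot\Phi_\Gamma(G)$ agree at $n\in\Gamma$ if and only if every factorization $n=ab$ with $a,b\in\Gamma(k)$ already satisfies $a,b\in\Gamma$; equivalently, if and only if $\Gamma$ is closed under taking divisors inside $\Gamma(k)$. This divisor-closedness is exactly the property that turns the restriction $\rho$ into a ring homomorphism and that underlies Corollary 2.5(2)'s description of $\mathcal F(\Gamma,R)$ as a quotient ring of $\mathcal F(\Gamma(k),R)$; it is not automatic for an arbitrary affine positive monoid (for instance $\Gamma=2\mathbb N\subset\mathbb N$ fails it, since $2\in\Gamma$ but $1\notin\Gamma$), so the crux of a correct write-up is to isolate this closure condition, verify it for the embedding at hand, and only then invoke Corollary 2.5(2). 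I would therefore devote the bulk of the argument to pinning down this saturation hypothesis, as it is exactly what the coefficient-extraction map $\Phi_\Gamma$ requires in order to respect Dirichlet convolution.
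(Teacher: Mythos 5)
Your diagnosis is exactly right, and it is in fact sharper than the remark you were asked to prove. The factorization $\Phi_\Gamma=\rho\circ\Phi_k^{-1}$, the $R$-linearity, and the surjectivity are all fine and match what the paper has available, but the multiplicativity step you flag as the main obstacle does not merely need to be ``verified for the embedding at hand'' --- it is false there. Your own counterexample $\Gamma=2\mathbb N\subset(\mathbb N,+)$ (embedded as $\{4^a:a\in\mathbb N\}\subset\Gamma(1)$) satisfies every hypothesis of the remark: it is a finitely generated submonoid of $\mathbb N^k$ with $k=1$, yet $\Phi_\Gamma(x_1)=0$ while $\Phi_\Gamma(x_1^2)\neq 0$, so the coefficient-extraction map is not a ring morphism. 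Since an affine positive monoid need not be divisor-closed in $\mathbb N^k$, no saturation hypothesis can be supplied, and the final step of your plan cannot be carried out: the statement, with the displayed formula for $\Phi_\Gamma$, is not provable, and what a correct write-up needs is not a verification but a repair of the map.

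The repair is visible in the paper's own source for the remark, namely the proof of Corollary $2.5$: the surjection constructed there is not restriction (coefficient extraction) but the substitution morphism $x_j\mapsto\alpha^j$, where $\alpha^j\in\mathcal F(\Gamma,R)$ is the indicator function of the $j$-th \emph{generator}. Transplanted to the remark, this gives $\Phi_\Gamma:R[[x_1,\ldots,x_r]]\rightarrow\mathcal F(\Gamma,R)$, $x_j\mapsto \mathbf 1_{i(v_j)}$, with one variable per generator ($r$ variables, not the ambient $k$; note the remark also misprints $i(a_1,\ldots,a_k)=p_1^{a_1}\cdots p_r^{a_k}$ and conflates $k$ with $r$). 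Because the convolution of indicator functions of elements of $\Gamma$ is the indicator function of their product, this map is automatically multiplicative; it is well defined since each $n\in\Gamma$ admits only finitely many factorizations into generators, and surjective since one may send $\sum_{n\in\Gamma}f(n)u_n$, with $u_n$ a chosen monomial mapping to $\mathbf 1_n$, onto $f$. Your argument for primeness of the kernel then goes through verbatim, as $\mathcal F(\Gamma,R)$ is a domain by Proposition $2.1$, and this corrected map recovers the toric ideal in the standard sense --- for instance $\Gamma=\langle 2,3\rangle\subset(\mathbb N,+)$ yields $I_\Gamma=(x_1^3-x_2^2)\subset R[[x_1,x_2]]$. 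In short: your surjectivity and primeness steps agree with the paper; on the one step where you rightly anticipated failure, the paper's cited construction quietly uses a different morphism, and your review correctly exposes that the remark's displayed formula does not define an algebra map.
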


An \emph{unique factorization domain} (UFD) is a domain $R$ in which every non-zero element $x$ can be written as a product 
$x=u\pi_1\cdots\pi_r$, where $u\in U(R)$ is an unit and $\pi_i\in R$ are prime elements, i.e. if $\pi_i|ab$ then $\pi_i|a$ or $\pi_i|b$. 
A ring $R$ is called \emph{regular} if $R$ is Noetherian and for every prime ideal $P\subset R$, 
the localization $R_P$ is a local regular ring. 

Note that, if $R$ is UFD then, in general, the ring of power series $R[[x]]$ is not, see
\cite[Proposition 14]{samuel}. However, if $R$ is regular and UFD, then $R[[x_1,\ldots,x_r]]$ is also regular and UFD, for any $r\geq 1$, 
see \cite[Theorem 2.1]{sam} or \cite[Theorem 3.2]{buchsbaum}. We recall the following result.

\begin{teor}(Cashwell-Everret \cite{cash2})
If $R$ is an UFD with the property that 
$$R[[x_1,\ldots,x_r]] \text{ is UFD }, (\forall)r\geq 1,$$ then $\mathcal F(\mathbb N^*,R)$ is UFD.
In particular, if $R$ is regular UFD, then $\mathcal F(\mathbb N^*,R)$ is UFD.
\end{teor}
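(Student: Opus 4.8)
The plan is to transport the problem to the power series description of Proposition 2.4 and then run a factorization argument that separates the ``archimedean'' behaviour, measured by the norm $N$, from the finite-level factorization theory supplied by the hypothesis. By Proposition 2.4 I identify $\mathcal F(\mathbb N^*,R)$ with $S:=R[[x_1,x_2,\ldots]]=\lim_{\longleftarrow}R_k$, where $R_k:=R[[x_1,\ldots,x_k]]$ and the transition map $R_{k+1}\to R_k$ sends $x_{k+1}$ to $0$; each $R_k$ is a UFD by hypothesis. By Proposition 2.1 the ring $S$ is a domain equipped with the multiplicative norm $N$, and by Proposition 2.2 its units are precisely the $f$ with $f(1)\in U(R)$. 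For the final assertion I recall that if $R$ is a regular UFD then, by the results of Samuel and of Buchsbaum quoted above, every $R[[x_1,\ldots,x_r]]$ is again a regular UFD, so the hypothesis is automatic; thus it suffices to treat the general statement.

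To prove that $S$ is a UFD I will verify the two usual conditions: (i) every nonzero nonunit is a finite product of irreducibles, and (ii) every irreducible is prime. For (i) I bound the length of any factorization of a fixed $0\neq f\in S$ using two multiplicative invariants. First, $N(fg)=N(f)N(g)$ shows that in a factorization into nonunits at most $\log_2 N(f)$ factors can satisfy $N(g_i)>1$. Secondly, the leading coefficient $f\mapsto f(N(f))$ is multiplicative (the minimality in the definition of $N$ kills the cross terms), so the factors with $N(g_i)=1$ contribute nonzero nonunits $g_i(1)$ of $R$ whose product divides the fixed element $f(N(f))\in R$; writing $\ell_R$ for the number of prime factors in the UFD $R$, this bounds their number by $\ell_R\!\left(f(N(f))\right)$. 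Hence every factorization of $f$ has length at most $\log_2 N(f)+\ell_R\!\left(f(N(f))\right)$, which yields the ascending chain condition on principal ideals and therefore (i).

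For (ii) the essential tool is a divisibility lemma: for nonzero $f,g\in S$ one has $g\mid f$ in $S$ if and only if $g_k\mid f_k$ in $R_k$ for all $k$, where $f_k:=f|_{\Gamma(k)}$. The nontrivial implication is immediate once one observes that, because each $R_k$ is a domain, the quotients $q_k:=f_k/g_k$ are uniquely determined and hence automatically compatible with the transition maps, so $q:=(q_k)$ defines an element of $S$ with $gq=f$. Granting this, let $\pi\in S$ be irreducible with $\pi\mid fg$; I want $\pi\mid f$ or $\pi\mid g$, that is, $\pi_k\mid f_k$ for all $k$, or $\pi_k\mid g_k$ for all $k$. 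At each level $\pi_k\mid f_k g_k$ in the UFD $R_k$, but the decision can a priori depend on $k$, and, since $\pi_k$ need not be irreducible in $R_k$, the prime factors of $\pi_k$ may split between $f_k$ and $g_k$.

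Controlling exactly this splitting is the main obstacle. The point to establish is that the irreducibility of $\pi$ in $S$, together with the coherence of the factorizations of $\pi_k$ across all levels (guaranteed by the finite-level UFD hypothesis and the divisibility lemma), forces the decomposition of $\pi_k$ to stabilize, so that a single prime factor of $\pi$ is recognized uniformly in all $R_k$ and the choice between $f$ and $g$ becomes independent of $k$. I expect this coherence step, reconciling the unit ambiguities of the finite-level factorizations into a single global prime divisor, to be the technically delicate heart of the argument, after which (ii), and hence the UFD property of $\mathcal F(\mathbb N^*,R)$, follows.
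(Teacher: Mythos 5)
Your step (i) is essentially correct: multiplicativity of $N$ (Proposition 2.1), multiplicativity of the leading coefficient $f\mapsto f(N(f))$, and the description of the units (Proposition 2.2) do bound the length of any factorization of $f$ into nonunits by $\log_2 N(f)+\ell_R\left(f(N(f))\right)$, which gives atomicity. Your divisibility lemma is also fine, up to a small repair: $g_k$ may vanish for small $k$ (e.g. $g=x_2$ has $g_1=0$), so the quotients $q_k$ exist and are unique only for $k$ large enough that $g_k\neq 0$; since such $k$ are cofinal, the inverse-limit argument still goes through.

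The genuine gap is exactly where you stop. Step (ii) --- irreducibles of $S$ are prime --- is not a technicality to be expected to work out; it is the entire content of the Cashwell--Everett theorem, and your text only names the difficulty (``the coherence step'') without supplying the idea that resolves it. (For comparison: the paper itself gives no proof at all; Theorem 2.7 is recalled with a citation to \cite{cash2}.) The missing idea is a stabilization argument. The projection $R_{k+1}\to R_k$ fixes the constant term, hence carries units to units and nonunits to nonunits; therefore each irreducible factor of $f_{k+1}$ maps to a nonunit of $R_k$, which splits into at least one irreducible, so the number of irreducible factors of $f_k$ (with multiplicity) is non-increasing in $k$ once $f_k\neq 0$, and being a positive integer it stabilizes at some $m$. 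From that level on, the unique factorization of $f_{k+1}$ projects factor-by-factor onto that of $f_k$, each irreducible mapping to a unit times an irreducible; absorbing these units (a unit of $R_k$ lifts to $R_{k+1}$ via the inclusion $R_k\subset R_{k+1}$) one obtains compatible sequences of factors and hence, in the inverse limit, a factorization $f=u\,q_1\cdots q_m$ in which each $q_i$ is irreducible at every sufficiently large finite level, hence irreducible in $S$. Applied to an irreducible $\pi$ of $S$ this forces $m=1$, i.e. $\pi_k$ is irreducible, hence prime, in $R_k$ for all large $k$. Only now does your divisibility lemma finish the job: if $\pi\mid fg$, then for each large $k$ either $\pi_k\mid f_k$ or $\pi_k\mid g_k$; divisibility at a higher level projects to divisibility at every lower level, so one of the two alternatives holds for all large $k$ simultaneously, and the lemma gives $\pi\mid f$ or $\pi\mid g$. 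Without the stabilization step, the decision really can depend on $k$ as far as your argument shows, and the proof does not close.
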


Let $p\in\mathbb P$ be a prime and $n\in\mathbb N^*$. We denote $v_p(n)$ the exponent of the highest power of $p$ dividing $m$.
According to \cite[Lemma 4.5.1]{shapiro}, the map $$D_p:\mathcal F(\mathbb N^*,\mathbb C) \rightarrow \mathcal F(\mathbb N^*,\mathbb C),\; D_p(\alpha)(n):=v_p(np)\alpha(np),$$
is a $\mathbb C$-derivation on $\mathcal F(\mathbb N^*,\mathbb C)$. Also, the map $$D_L:\mathcal F(\mathbb N^*,\mathbb C) \rightarrow \mathcal F(\mathbb N^*,\mathbb C),\;D_L(\alpha)(n)= -\log (n) \alpha(n),$$
is a $\mathbb C$-derivation which is called the \emph{log-derivation}. The following propositions extend these observations.

\begin{prop}
Let $A$ be a domain and let $R$ be an $A$-algebra. Let $D:R\rightarrow R$ be an $A$-derivation. Let $p\in\mathbb P$. The map
$$\widetilde D:\mathcal F(\mathbb N^*,R)\rightarrow \mathcal F(\mathbb N^*,R),\; \widetilde D_p(\alpha)(n)= D(\alpha(n)) + \alpha(np)v_p(np),$$
is an $A$-derivation of $\mathcal F(\mathbb N^*,R)$.
\end{prop}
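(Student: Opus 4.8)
The plan is to check the two defining properties of an $A$-derivation separately: $A$-linearity and the Leibniz rule. The $A$-linearity of $\widetilde D_p$ is routine and follows the same termwise manipulation as in the proof of Proposition $1.7$: additivity is immediate from the additivity of $D$ and of the assignment $\alpha\mapsto\alpha(np)$; and for $a\in A$ (viewed in $R$ via the structure morphism) the equalities $D(a\alpha(n))=aD(\alpha(n))$ and $(a\alpha)(np)v_p(np)=a\,\alpha(np)v_p(np)$ give $\widetilde D_p(a\alpha)=a\widetilde D_p(\alpha)$. So the entire content of the statement sits in the Leibniz rule.

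To organize the Leibniz computation I would split $\widetilde D_p$ as a sum of two operators, $\widetilde D_p=\widetilde D_0+\Delta_p$, where $\widetilde D_0(\alpha)(n)=D(\alpha(n))$ and $\Delta_p(\alpha)(n)=v_p(np)\,\alpha(np)$. The first summand $\widetilde D_0$ is already known to be an $A$-derivation: it is exactly the operator produced by Proposition $1.7$ in the case $M=R$ and $\delta=0$ (the zero map being a monoid morphism $(\mathbb N^*,\cdot)\to(R,+)$). Since a sum of two $A$-derivations is again an $A$-derivation, it suffices to prove that $\Delta_p$ is an $A$-derivation. As $\Delta_p$ is visibly additive and $A$-linear, everything reduces to the single identity
$$\Delta_p(\alpha\cdot\beta)=\Delta_p(\alpha)\cdot\beta+\alpha\cdot\Delta_p(\beta).$$

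This identity is the main obstacle. Expanding the left-hand side gives $\Delta_p(\alpha\cdot\beta)(n)=v_p(np)\sum_{cd=np}\alpha(c)\beta(d)$, and the key is the additivity of the $p$-adic valuation, $v_p(np)=v_p(c)+v_p(d)$ for every factorization $cd=np$, which splits the sum into $\sum_{cd=np}v_p(c)\alpha(c)\beta(d)$ and $\sum_{cd=np}v_p(d)\alpha(c)\beta(d)$. In the first sum the summands with $p\nmid c$ vanish, and the substitution $c=pa$, $d=b$ sets up a bijection between the factorizations $cd=np$ with $p\mid c$ and the factorizations $ab=n$, under which $v_p(c)=v_p(ap)$; hence this sum equals $\sum_{ab=n}v_p(ap)\,\alpha(ap)\beta(b)=(\Delta_p(\alpha)\cdot\beta)(n)$. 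The symmetric substitution $d=pb$ on the second sum yields $(\alpha\cdot\Delta_p(\beta))(n)$, which finishes the verification. The delicate point to get right is precisely this reindexing: one must check that $(c,d)\mapsto(c/p,d)$ is a well-defined bijection onto the full set of divisor pairs of $n$ and that it transports the valuation weight correctly.

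Finally, as a conceptual cross-check I would note that under the isomorphism $\mathcal F(\mathbb N^*,R)\cong R[[x_1,x_2,\ldots]]$ of Proposition $2.4$, with $p=p_i$, the operator $\Delta_p$ corresponds to the formal partial derivative $\partial/\partial x_i$ and $\widetilde D_0$ to the coefficient-wise extension of $D$. Both are standard derivations on the power-series ring, so their sum is a derivation; this both explains the decomposition above and confirms the conclusion.
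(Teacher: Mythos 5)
Your proof is correct, and its computational crux is the same as the paper's: both arguments come down to the additivity $v_p(cd)=v_p(c)+v_p(d)$ applied to factorizations of $np$, followed by the reindexing $(c,d)\mapsto (c/p,\,d)$, which is legitimate because the omitted pairs (those with $p\nmid c$) carry weight $v_p(c)=0$. The difference is in the organization. The paper proves the Leibniz rule for the full operator $\widetilde D_p$ in one monolithic computation: it expands $\widetilde D_p(\alpha\cdot\beta)(n)$ into the three sums
$\sum_{ab=n}D(\alpha(a))\beta(b)+\sum_{ab=n}\alpha(a)D(\beta(b))+\sum_{ab=np}\alpha(a)\beta(b)(v_p(a)+v_p(b))$
and matches them against $\widetilde D_p(\alpha)\cdot\beta+\alpha\cdot\widetilde D_p(\beta)$ term by term, the reindexing appearing as the displayed identity $\sum_{ab=n}\alpha(ap)v_p(ap)\beta(b)=\sum_{ab=np}\alpha(a)v_p(a)\beta(b)$ (justified there, rather tersely, by the remark that $v_p(1)=0$). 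Your decomposition $\widetilde D_p=\widetilde D_0+\Delta_p$ is a cleaner route: it disposes of the $D$-part by citing Proposition $1.7$ with $M=R$ and $\delta=0$, invokes the fact (recorded in the paper) that $A$-derivations are closed under addition, and thereby isolates the only genuinely new content, namely the Leibniz rule for the shift operator $\Delta_p$. This splitting is in fact necessary if one wants to reuse Proposition $1.7$, since $\Delta_p$ is not of the form $\alpha\mapsto\alpha(\cdot)\delta(\cdot)$ covered there (it shifts the argument from $n$ to $np$), which is presumably why the paper redoes the entire verification by hand. Your power-series cross-check is also sound: under the isomorphism of Proposition $2.4$, $\Delta_{p_i}$ corresponds exactly to $\partial/\partial x_i$ and $\widetilde D_0$ to the coefficient-wise application of $D$; it is not needed for the proof, but it explains why the statement is true.
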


\begin{proof}
 Let $\alpha,\beta\in F(\mathbb N^*,R)$. We have that
$$ \widetilde D_p(\alpha+\beta)(n) = D((\alpha+\beta)(n)) + (\alpha+\beta)(np)v_p(np) = D(\alpha(n)+\beta(n)) + (\alpha(np)+\beta(np))v_p(np) = $$
$$ = D(\alpha(n)) + \alpha(np)v_p(np)+ D(\beta(n)) + \beta(np)v_p(np) = \widetilde D_p(\alpha)(n) + \widetilde D_p(\beta)(n).$$
Let $r\in A$ and $\alpha\in F(\mathbb N^*,R)$. As in the proof of Proposition $1.7$, we identify $r$ with $i(r)\in R$, where $i:A\rightarrow R$ is the
structure morphism of the $A$-algebra $R$. We have that
$$ \widetilde D_p(r\alpha)(n) = D((r\alpha)(n)) + (r\alpha)(np)v_p(np) = D(r\alpha(n)) + r\alpha(np)v_p(np) = $$ $$ = rD(\alpha(n))+ r\alpha(np)v_p(np) = r\widetilde D_p(r\alpha)(n).$$
It follows that $\widetilde D_p$ is $A$-linear. Let $\alpha,\beta\in F(\mathbb N^*,R)$. We have that
$$\widetilde D_p(\alpha\cdot \beta)(n) = D((\alpha\cdot \beta)(n)) + (\alpha\cdot \beta)(np)v_p(np) = D(\sum_{ab=n}\alpha(a)\beta(b)) + \left(\sum_{ab=np}\alpha(a)\beta(b)\right)v_p(np) = $$
\begin{equation}\label{der1}
 =  \sum_{ab=n}D(\alpha(a))\beta(b) + \sum_{ab=n}\alpha(a)D(\beta(b)) + \sum_{ab=np} \alpha(a)\beta(b)(v_p(a)+v_p(b)).
\end{equation}
On the other hand,
$$(\widetilde D_p(\alpha)\cdot \beta)(n) =  \sum_{ab=n}\widetilde D_p(\alpha)(a)\beta(b) = \sum_{ab=n} (D(\alpha(a)) + \alpha(ap)v_p(ap))\beta(b) = $$
\begin{equation}\label{der2}
 \sum_{ab=n} (D(\alpha(a))\beta(b) +  \sum_{ab=n} \alpha(ap)v_p(ap)\beta(b).
\end{equation}
On the other hand, as $v_p(1)=0$, it follows that 
\begin{equation}\label{der3}
\sum_{ab=n}\alpha(ap)v_p(ap)\beta(b) = \sum_{ab=np}\alpha(a)v_p(a)\beta(b) 
\end{equation}
From (\ref{der1}), (\ref{der2}) and (\ref{der3}) it follows that
$$\widetilde D_p(\alpha\cdot \beta)  = \widetilde D_p(\alpha)\cdot \beta + \alpha \cdot \widetilde D_p(\beta),$$
as required.
\end{proof}

\begin{prop}
Let $U\subset \mathbb R$ ($U\subset \mathbb C$) be an open set and let $\mathcal O(U)$ be the ring of $\mathbb R$-derivable (holomorphic) functions defined in $U$ with values in $\mathbb R$ 
(respectively $\mathbb C$). We consider 
$$\widetilde D: \mathcal F(\mathbb N^*,\mathcal O(U)) \rightarrow \mathcal F(\mathbb N^*,\mathcal O(U)), \; \widetilde  D(\alpha)(n)(z):=\alpha(n)'(z) - \alpha(n)(z) \log n, \; (\forall)n\in \mathbb N, z\in U.$$
Then $\widetilde D$ is a $\mathbb R$-derivation ($\mathbb C$-derivation) on $\mathcal F(\mathbb N^*,\mathcal O(U))$.
In the language of the sheaves theory, $\mathcal F(\mathbb N^*,\mathcal O)$ is a sheaf of $\mathcal O$-algebras and $\widetilde D:\mathcal F(\mathbb N^*,\mathcal O) \rightarrow \mathcal F(\mathbb N^*,\mathcal O)$ 
is a $\mathbb R$-derivation (respectively $\mathbb C$-derivation).
\end{prop}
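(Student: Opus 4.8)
The plan is to recognize $\widetilde D$ as a particular instance of the general construction of Proposition $1.7$, taking $\Gamma=\mathbb N^*$ (the multiplicative monoid), $A=\mathbb R$ (respectively $\mathbb C$), $R=M=\mathcal O(U)$, with $D$ the ordinary differentiation operator and $\delta$ a suitable monoid morphism encoding the factor $-\log n$. Once the two ingredients $D$ and $\delta$ are identified and checked to satisfy the hypotheses of Proposition $1.7$, the conclusion follows with no additional computation.

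First I would verify that ordinary differentiation $D:\mathcal O(U)\to\mathcal O(U)$, $D(g)=g'$, is a genuine $A$-derivation of $\mathcal O(U)$: it is $A$-linear and satisfies the Leibniz rule $(fg)'=f'g+fg'$. The one point requiring attention is that $D$ must send $\mathcal O(U)$ into itself. In the holomorphic case $U\subset\mathbb C$ this is automatic, since the derivative of a holomorphic function is again holomorphic. In the real case one must read $\mathcal O(U)$ as a class of functions closed under differentiation (for instance $C^{\infty}$ or real-analytic functions), so that $g'\in\mathcal O(U)$ whenever $g\in\mathcal O(U)$.

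Next I would define $\delta:\mathbb N^*\to(\mathcal O(U),+)$ by $\delta(n)=-\log n$, where $\log n$ is identified with the constant function $z\mapsto\log n$ in $\mathcal O(U)$. Since $\log(mn)=\log m+\log n$ and $\log 1=0$, the map $\delta$ is a morphism of monoids from $(\mathbb N^*,\cdot)$ to the additive group $(\mathcal O(U),+)$, as required by Proposition $1.7$. With these choices the general derivation of that proposition becomes $\widetilde D(\alpha)(n)=D(\alpha(n))+\alpha(n)\delta(n)=\alpha(n)'-\alpha(n)\log n$, which is precisely the formula in the statement; hence $\widetilde D$ is an $A$-derivation on $\mathcal F(\mathbb N^*,\mathcal O(U))$.

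Finally, for the sheaf-theoretic assertion I would note that the entire construction is compatible with restriction to open subsets: for open $V\subseteq U$ the same formula defines $\widetilde D$ on $\mathcal F(\mathbb N^*,\mathcal O(V))$, and because both differentiation and the constants $-\log n$ commute with restriction of functions, the maps $\widetilde D_V$ assemble into a morphism of sheaves of $\mathcal O$-algebras $\widetilde D:\mathcal F(\mathbb N^*,\mathcal O)\to\mathcal F(\mathbb N^*,\mathcal O)$ that is an $A$-derivation on each stalk. I expect no genuine obstacle here: apart from the well-definedness check that $D$ is an endomorphism of $\mathcal O(U)$ in the real case, the whole argument is a direct specialization of Proposition $1.7$.
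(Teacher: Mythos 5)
Your proof is correct and follows essentially the same route as the paper: the paper's own proof consists exactly of setting $\delta(n):=-\log n$ (viewed as a constant function in $\mathcal O(U)$) and invoking Proposition $1.7$ with $D$ the ordinary differentiation operator. Your additional remarks --- that in the real case $\mathcal O(U)$ must be taken closed under differentiation, and that the construction commutes with restriction so the maps glue into a sheaf morphism --- are sensible elaborations of details the paper leaves implicit.
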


\begin{proof}
For any $n\in\mathbb N^*$, we define $\delta(n):=-\log n$. 
It is easy to check that the hypothesis of Proposition $1.7$ are satisfied, 
hence $\widetilde D$ is a $\mathbb R$-derivation ($\mathbb C$-derivation) on $\mathcal F(\mathbb N^*,\mathcal O(U))$.
\end{proof}

\begin{obs}
\emph{Let $U\subset \mathbb R$ ($U\subset \mathbb C$) be an open set.
Let $\alpha\in \mathcal F(\mathbb N^*,\mathcal O(U))$ such that the series of functions $F_{\alpha}(x):=\sum_{n=1}^{\infty} \frac{\alpha(n)(x)}{n^x}$ 
and $G_{\alpha}(x)= \sum_{n=1}^{\infty} \left(\frac{\alpha(n)(x)}{n^x}\right)'$ are uniformly convergent on the compact subsets $K\subset U$. It is well known that, in this case, $F$ defines a derivable
(holomorphic) function on $U$ and, moreover, $F'=G$. In other words, $F'_{\alpha} = F_{\widetilde D(\alpha)}$, where $\widetilde D$ is defined in Proposition $2.9$.}

\emph{Let $L \in \mathcal T(\mathbb N^*,\mathcal O(U))$, see the notations from section $1$. We define the map
$$\Phi_{L}: \mathcal F(\mathbb N^*,\mathcal O(U)) \rightarrow \mathcal F(\mathbb N^*,\mathcal O(U)),\; \Phi_{L}(\alpha)(n)(z):=L(n)(z)\alpha(n)(z),\;(\forall)n\geq 1,\; z\in U.$$
According to Proposition $1.6$, $\Phi_{L}$ is a $\mathcal O(U)$-algebra endomorphism.}
\end{obs}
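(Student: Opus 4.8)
The plan is to separate the two assertions of the remark. For the first, the analytic input---that uniform convergence on the compact subsets $K\subset U$ of both $F_{\alpha}$ and of the term-by-term derivative series $G_{\alpha}$ forces $F_{\alpha}$ to be $\mathbb R$-derivable (respectively holomorphic) with $F_{\alpha}'=G_{\alpha}$---is the classical Weierstrass convergence theorem in the complex case and the standard theorem on differentiation of series in the real case. I would simply invoke it, since the statement itself flags this as well known; the genuinely new content is then the identification $G_{\alpha}=F_{\widetilde D(\alpha)}$, which reduces to a pointwise product-rule computation.

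First I would differentiate a single term. Writing $n^{-z}=e^{-z\log n}$, one has $\frac{d}{dz}n^{-z}=-(\log n)\,n^{-z}$, so the product rule gives
$$\left(\frac{\alpha(n)(z)}{n^z}\right)' = \frac{\alpha(n)'(z)}{n^z} - \frac{\alpha(n)(z)\log n}{n^z} = \frac{\alpha(n)'(z) - \alpha(n)(z)\log n}{n^z}.$$
By the definition of $\widetilde D$ in Proposition $2.9$, namely $\widetilde D(\alpha)(n)(z)=\alpha(n)'(z)-\alpha(n)(z)\log n$, the numerator is exactly $\widetilde D(\alpha)(n)(z)$. Summing over $n$ and using the hypothesis that $G_{\alpha}$ converges uniformly on compacts yields
$$G_{\alpha}(z) = \sum_{n=1}^{\infty}\left(\frac{\alpha(n)(z)}{n^z}\right)' = \sum_{n=1}^{\infty}\frac{\widetilde D(\alpha)(n)(z)}{n^z} = F_{\widetilde D(\alpha)}(z).$$
Combining this with $F_{\alpha}'=G_{\alpha}$ from the first paragraph gives $F_{\alpha}'=F_{\widetilde D(\alpha)}$, as claimed.

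For the second assertion I would first check that $\Phi_{L}$ is well defined: for each $n$ the function $z\mapsto L(n)(z)\alpha(n)(z)$ is a product of holomorphic (respectively $\mathbb R$-derivable) functions, hence lies in $\mathcal O(U)$, so $\Phi_{L}(\alpha)\in\mathcal F(\mathbb N^*,\mathcal O(U))$. Then I would observe that $\Phi_{L}$ is precisely the map $\Phi_{L,R}$ of Proposition $1.6$ specialized to $R=\mathcal O(U)$ and $\Gamma=\mathbb N^*$, the hypothesis $L\in\mathcal T(\mathbb N^*,\mathcal O(U))$ being exactly the total multiplicativity required there. Proposition $1.6(1)$ then delivers that $\Phi_{L}$ is an $\mathcal O(U)$-algebra endomorphism with no further computation.

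The only real obstacle is bookkeeping rather than mathematics: one must verify that the hypotheses of the Weierstrass (respectively differentiation-of-series) theorem are met in the exact form stated, that is, uniform convergence of \emph{both} $F_{\alpha}$ and $G_{\alpha}$ on every compact $K\subset U$, and confirm that the term-by-term identity $G_{\alpha}=F_{\widetilde D(\alpha)}$ holds at every point of $U$. Both of these follow immediately from the pointwise computation above together with the cited convergence theorem, so no genuine difficulty arises.
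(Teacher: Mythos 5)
Your proposal is correct and follows essentially the same route as the paper, which treats this as a remark justified by citing the classical Weierstrass/differentiation-of-series theorem for $F_\alpha'=G_\alpha$ and by specializing Proposition $1.6$ to $R=\mathcal O(U)$, $\Gamma=\mathbb N^*$ for the endomorphism claim. Your explicit term-by-term computation $\left(\alpha(n)(z)n^{-z}\right)'=\bigl(\alpha(n)'(z)-\alpha(n)(z)\log n\bigr)n^{-z}=\widetilde D(\alpha)(n)(z)n^{-z}$ merely makes precise the identity $G_\alpha=F_{\widetilde D(\alpha)}$ that the paper leaves implicit.
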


\section{An extension of the ring of Dirichlet convolutions}

Let $\Gamma$ be a commutative monoid of finite type. We assume that $(\Gamma,\cdot)$ is \emph{cancellative}, i.e. $xy = xz$ implies $y=z$. 
We recall the construction of the \emph{Grothendieck group}, denoted by $G(\Gamma)$, associated to $\Gamma$, see \cite{atiyah} for further details. 
On the set $\Gamma\times\Gamma$ we define the relation $$ (m,n)\sim (p,q) \Leftrightarrow mq=np,\;m,n,p,q\in \Gamma.$$
One can easily check that $\sim$ is a relation of equivalence on $\Gamma$ which is compatible with the multiplication.
We denote $G(\Gamma):=\Gamma\times\Gamma/\sim$. We denote $\frac{m}{n}$ the class of $(m,n)$ in $G(\Gamma)$.
We define the operation $\cdot$ on $G(\Gamma)$ by $$ \frac{m}{n}\cdot\frac{m'}{n'}:=\frac{mm'}{nn'},\;(\forall)m,n,m',n'\in\Gamma.$$
The identity in $G(\Gamma)$ is $1=\frac{1}{1}$. If $q=\frac{m}{n}\in G(\Gamma)$, then $q^{-1}=\frac{n}{m}$.
For example, it holds that $G((\mathbb N,+))=(\mathbb Z,+)$ and $G((\mathbb N^*,\cdot))=(\mathbb Q_+^*,\cdot)$.

\begin{prop}(universal property)
The map $i:\Gamma \rightarrow G(\Gamma)$, $n\mapsto \frac{n}{1}$, is an injective morphism of monoids, and 
for any commutative group $(G,\cdot)$ and morphism of monoids $f:\Gamma\rightarrow G$, there exists a morphism of groups
 $\bar f:G(\Gamma)\rightarrow G$ such that $\bar f \circ i = f$. 
\end{prop}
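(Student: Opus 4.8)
The plan is to exhibit the candidate map explicitly and then verify, in sequence, that $i$ is an injective monoid morphism, that the proposed $\bar f$ is well-defined and multiplicative, and finally that it factors $f$. The whole argument rests on the fact that $\sim$ is already known to be a well-formed equivalence compatible with multiplication, so I may freely manipulate classes $\frac{m}{n}$.

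First I would handle $i$. The equalities $i(mn) = \frac{mn}{1} = \frac{m}{1}\cdot\frac{n}{1} = i(m)i(n)$ and $i(1) = \frac{1}{1} = 1$ show $i$ is a morphism of monoids. For injectivity, $i(m) = i(n)$ unwinds to $(m,1)\sim(n,1)$, that is $m\cdot 1 = n\cdot 1$, hence $m = n$.

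For the universal property I would set $\bar f\left(\frac{m}{n}\right) := f(m)f(n)^{-1}$, which makes sense precisely because $G$ is a group. The crux is well-definedness: if $\frac{m}{n} = \frac{p}{q}$, i.e. $mq = np$, then applying the monoid morphism $f$ gives $f(m)f(q) = f(n)f(p)$, and since $G$ is a commutative group I may rearrange this to $f(m)f(n)^{-1} = f(p)f(q)^{-1}$, as needed. With well-definedness secured, the homomorphism property is a direct computation using commutativity of $G$, namely $\bar f\left(\frac{m}{n}\cdot\frac{m'}{n'}\right) = f(mm')f(nn')^{-1} = \left(f(m)f(n)^{-1}\right)\left(f(m')f(n')^{-1}\right)$. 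Finally $\bar f(i(n)) = \bar f\left(\frac{n}{1}\right) = f(n)f(1)^{-1} = f(n)$, using $f(1) = 1$, so $\bar f\circ i = f$.

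I would close by recording uniqueness, which comes essentially for free: since $\frac{m}{n} = i(m)\,i(n)^{-1}$ in $G(\Gamma)$, any group morphism $g$ with $g\circ i = f$ must satisfy $g\left(\frac{m}{n}\right) = g(i(m))\,g(i(n))^{-1} = f(m)f(n)^{-1}$, forcing $g = \bar f$. The only genuinely delicate point in the whole argument is the well-definedness of $\bar f$; and even there the hard part — transitivity of $\sim$, which is where cancellativity of $\Gamma$ is actually used — has already been absorbed into the prior claim that $G(\Gamma)$ is a group, so the verification above requires only that $f$ respects the defining relation $mq = np$.
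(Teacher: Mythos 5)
Your proposal is correct and follows exactly the paper's approach: you define $\bar f\left(\frac{m}{n}\right) := f(m)f(n)^{-1}$ just as the paper does, and then carry out the ``straightforward computations'' (injectivity of $i$, well-definedness via $mq=np$, multiplicativity, and $\bar f\circ i = f$) that the paper leaves to the reader. The added uniqueness observation is a harmless bonus beyond what the statement asks.
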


\begin{proof}
The map $\bar f$ is defined by $$\bar f(\frac{m}{n}):= f(m)\cdot f(n)^{-1},\; (\forall)m,n\in\Gamma.$$
The conclusion follows by straightforward computations.
\end{proof}

Let $\Gamma$ be a commutative monoid of finite type, cancellative. By Proposition $3.1$, we can see $\Gamma$ as a subset of $G(\Gamma)$.
Let $R$ be a commutative ring with unity. In the set
$$\wF(G(\Gamma),R):=\{\alpha:G(\Gamma) \rightarrow R\;:\; (\exists)d\in\Gamma \text{ such that } (\forall)q\in G(\Gamma), 
\alpha(q)\neq 0 \Rightarrow dq\in\Gamma\}.$$
we consider the operations
$$ (\alpha+\beta)(q):=\alpha(q)+\beta(q),\;(\forall)q\in G(\Gamma), $$
$$ (\alpha \cdot \beta)(q):=\sum_{q'q''=q}\alpha(q')\beta(q''),\;(\forall)q\in G(\Gamma).$$
We prove the following result.

\begin{prop}
$(\wF(G(\Gamma),R),+,\cdot)$ is an $R$-algebra. Moreover, we have a canonical injective morphism of $R$-algebras
$$i_*:\mathcal F(\Gamma,R) \rightarrow \wF (G(\Gamma),R),\; i_*(\alpha)(q):=\begin{cases}
  \alpha(q),\;q\in \Gamma \\ 0,\;q\notin \Gamma \end{cases},$$
hence $\wF(G(\Gamma),R)$ is an extension of the ring $\mathcal F(\Gamma,R)$.
\end{prop}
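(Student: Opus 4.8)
The plan is to treat the defining condition on elements of $\wF(G(\Gamma),R)$ as a ``bounded denominator'' condition and to run all verifications in terms of it. Concretely, I would first record the reformulation: $\alpha\in\wF(G(\Gamma),R)$ exactly when its support is contained in $\{q\in G(\Gamma): dq\in\Gamma\}$ for some $d\in\Gamma$, and I will call such a $d$ a \emph{denominator} for $\alpha$. With this language, closure under addition is immediate: if $d_1,d_2$ are denominators for $\alpha,\beta$, then $d_1d_2$ is a denominator for $\alpha+\beta$, since $(\alpha+\beta)(q)\neq 0$ forces $\alpha(q)\neq 0$ or $\beta(q)\neq 0$, and multiplying the relevant relation $d_iq\in\Gamma$ by the other denominator keeps the result in $\Gamma$. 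That $(\wF(G(\Gamma),R),+)$ is an abelian group is then clear.

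The crux is the convolution product, and I expect this to be the main obstacle, since it is the only place where both hypotheses on $\Gamma$ are genuinely used. Fix $q\in G(\Gamma)$ and let $d_1,d_2$ be denominators for $\alpha,\beta$. For any factorization $q=q'q''$ contributing a nonzero term one has $d_1q'\in\Gamma$ and $d_2q''\in\Gamma$, so the pair $(a,b):=(d_1q',d_2q'')\in\Gamma\times\Gamma$ satisfies $ab=d_1d_2q$. In particular $d_1d_2q\in\Gamma$, which already shows that $d_1d_2$ is a denominator for $\alpha\cdot\beta$, hence closure under multiplication. For finiteness of the sum I would observe that the assignment $(q',q'')\mapsto(a,b)$ is injective, because $G(\Gamma)$ is a group and we recover $q'=d_1^{-1}a$, $q''=d_2^{-1}b$; thus the nonzero terms inject into the set of ordered factorizations of $d_1d_2q\in\Gamma$ into two factors from $\Gamma$, which is finite because $\Gamma$ is of finite type. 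This is precisely where cancellativity (giving the injection into $\Gamma\times\Gamma$) and finite type (giving finiteness of factorizations) are both invoked; the remainder is bookkeeping.

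With the operations internal and well-defined, the ring axioms follow from the same computations as in Proposition $1.3$, now performed over factorizations in $G(\Gamma)$: associativity reduces both $(\alpha\cdot\beta)\cdot\gamma$ and $\alpha\cdot(\beta\cdot\gamma)$ to $\sum_{q'q''q'''=q}\alpha(q')\beta(q'')\gamma(q''')$, each term sum being finite by the denominator argument applied twice; distributivity and commutativity are routine. The unit is the function $e$ with $e(1)=1$ and $e(q)=0$ otherwise, which has denominator $1\in\Gamma$, and the $R$-algebra structure is given by $(r\cdot\alpha)(q):=r\alpha(q)$, with denominators unaffected.

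Finally I would check $i_*$. Its image lies in $\wF(G(\Gamma),R)$ with denominator $1$, since $i_*(\alpha)$ is supported on $\Gamma$; it is visibly additive and $R$-linear, and injective, because $i_*(\alpha)=0$ forces $\alpha$ to vanish on all of $\Gamma$. Multiplicativity is the one point requiring care: by Proposition $3.1$ the inclusion $\Gamma\hookrightarrow G(\Gamma)$ is an injective morphism of monoids onto a submonoid, so a factorization $q=q'q''$ in $G(\Gamma)$ with $q',q''\in\Gamma$ is the same datum as a factorization $q=ab$ inside $\Gamma$. Hence for $q\in\Gamma$ the convolution $(i_*(\alpha)\cdot i_*(\beta))(q)$ collapses, after discarding the terms with $q'\notin\Gamma$ or $q''\notin\Gamma$, to the convolution $(\alpha\cdot\beta)(q)$ in $\mathcal F(\Gamma,R)$; while for $q\notin\Gamma$ no factorization has both factors in $\Gamma$ (otherwise $q=q'q''\in\Gamma$), so both $i_*(\alpha\cdot\beta)(q)$ and $(i_*(\alpha)\cdot i_*(\beta))(q)$ vanish. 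Together with $i_*(e)=e$, this gives that $i_*$ is an injective morphism of $R$-algebras, and hence that $\wF(G(\Gamma),R)$ is an extension of $\mathcal F(\Gamma,R)$.
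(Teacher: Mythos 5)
Your proof is correct and follows essentially the same route as the paper: the same product-of-denominators argument for closure under addition, and the same re-indexing of the convolution sum via $(q',q'')\mapsto(d_1q',d_2q'')$ into factorizations of $d_1d_2q$ inside $\Gamma$, with finite type giving finiteness. The only difference is one of completeness: the paper defers the ring axioms and the verification of $i_*$ to its Propositions $1.1$ and $1.5(1)$, while you spell them out (and make explicit the injectivity of the re-indexing map, which the paper leaves implicit).
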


\begin{proof}
First, we prove that $+$ and $\cdot$ are well defined. Let $d',d''\in\Gamma$ such that,
for any $q\in G(\Gamma)$, if $\alpha(q)\neq 0$, then $d'q\in \Gamma$, and if $\beta(q)\neq 0$, then $d''q\in \Gamma$.
Let $d:=d'd''$. Assume that $(\alpha+\beta)(q)\neq 0$. It follows that either $\alpha(q)\neq 0$, either $\beta(q)\neq 0$.
If $\alpha(q)\neq 0$, then $dq=(d'd'')q=d''(d'q)\in\Gamma$. If $\beta(q)\neq 0$, then $dq=(d'd'')q=d'(d''q)\in\Gamma$. 
Hence $\alpha+\beta\in \wF(G(\Gamma),R)$.

Now, let $q,q',q''\in G(\Gamma)$ such that $q'q''=q$. Since $dq=d'd''q'q''=(d'q')(d''q'')\in \Gamma$, it follows that
$$ (\alpha \cdot \beta)(q):=\sum_{ab=dq}\alpha(\frac{a}{d'})\beta(\frac{b}{d''}),$$
hence the multiplication is well defined by the fact that $\Gamma$ is of finite type.
Also, $(\alpha\cdot \beta)(q)\neq 0$ implies the fact that there exists $q',q''\in G(\Gamma)$ such that $q=q'q''$, 
$\alpha(q')\neq 0$ and $\beta(q'')\neq 0$. As above, we have that $dq\in \Gamma$, hence $\alpha\cdot \beta \in \wF(G(\Gamma),R)$.

The proof of the fact that $(\wF(G(\Gamma),R),+,\cdot)$ is an $R$-algebra is similar to the proof of Proposition $1.1$, 
hence we skip it. The proof of the fact that $i_*$ is an injective morphism of $R$-algebras is similar to the proof of Proposition $1.5(1)$.
 \end{proof}

Let $M$ be an $R$-module. We consider the set
$$\wF(G(\Gamma),M):=\{f:G(\Gamma) \rightarrow M\;:\; (\exists)d\in\Gamma \text{ such that } (\forall)q\in G(\Gamma) 
f(q)\neq 0 \Rightarrow dq\in\Gamma\}.$$
For $f,q\in\wF(G(\Gamma),M)$ we define $(f+g)(q):=f(q)+g(q),(\forall)q\in G(M)$.
Given $\alpha\in \wF(G(\Gamma),R)$ and $f\in\wF(G(\Gamma),M)$ we define 
$$(\alpha\cdot f)(q):= \sum_{q'q''=q}\alpha(q')f(q''),\;(\forall)q\in G(\Gamma).$$

Similarly to Proposition $3.2$, we have the following result.

\begin{prop}
$\wF(G(\Gamma),M)$ has a structure of an $\wF(G(\Gamma),R)$-module. Moreover, we have a canonical injective morphism of $\wF(\Gamma,R)$-modules
$$i_*:\mathcal F(\Gamma,M) \rightarrow \wF (G(\Gamma),M),\; i_*(\alpha)(q):=\begin{cases}
  \alpha(q),\;q\in \Gamma \\ 0,\;q\notin \Gamma \end{cases}.$$
\end{prop}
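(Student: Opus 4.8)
The plan is to mirror the proof of Proposition $3.2$, replacing the ring-valued target by the module $M$ and the internal product by the scalar action. First I would establish that the two operations on $\wF(G(\Gamma),M)$ are well defined. For the sum this is verbatim the additive argument of Proposition $3.2$: if $d',d''\in\Gamma$ are witnesses for $f$ and $g$ respectively, then $d:=d'd''$ is a witness for $f+g$, so $(\wF(G(\Gamma),M),+)$ is an abelian group.

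The heart of the argument is the well-definedness of the scalar product. Given $\alpha\in\wF(G(\Gamma),R)$ with witness $d'$ and $f\in\wF(G(\Gamma),M)$ with witness $d''$, I would show that for each $q\in G(\Gamma)$ the sum defining $(\alpha\cdot f)(q)$ is finite and that $\alpha\cdot f$ again lies in $\wF(G(\Gamma),M)$. The key observation, exactly as in Proposition $3.2$, is that a term $\alpha(q')f(q'')$ with $q'q''=q$ can be nonzero only if $d'q'=:a\in\Gamma$ and $d''q''=:b\in\Gamma$, in which case $ab=d'd''q=dq$ with $d:=d'd''$. By cancellativity the assignments $q'\mapsto a$ and $q''\mapsto b$ are injective, so the nonzero terms are indexed by the factorizations $ab=dq$ in $\Gamma$ and we may rewrite the sum as $\sum_{ab=dq}\alpha(a/d')f(b/d'')$. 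Since $\Gamma$ is of finite type, this set of factorizations is finite (and empty when $dq\notin\Gamma$), so the sum is finite; moreover $(\alpha\cdot f)(q)\neq 0$ forces $dq=(d'q')(d''q'')\in\Gamma$, so $d$ is a witness for $\alpha\cdot f$. I expect this step to be the main obstacle, since it is the only place where the finite-type hypothesis and the support condition defining $\wF$ genuinely interact.

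With well-definedness in hand, the module axioms are formal and follow the template of Proposition $1.3(1)$: associativity $(\alpha\cdot\beta)\cdot f=\alpha\cdot(\beta\cdot f)$ reduces, after expanding both sides as sums over triple factorizations $q_1q_2q_3=q$ in $G(\Gamma)$, to the associativity of the product in $G(\Gamma)$ together with a reindexing; distributivity on both sides and the identity law $e\cdot f=f$ are immediate from the definitions. Thus $\wF(G(\Gamma),M)$ is an $\wF(G(\Gamma),R)$-module.

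Finally, for the map $i_*$ I would argue exactly as in Proposition $1.5(2)$ and the last part of Proposition $3.2$, regarding $\mathcal F(\Gamma,M)$ as an $\mathcal F(\Gamma,R)$-module and $\wF(G(\Gamma),M)$ as an $\mathcal F(\Gamma,R)$-module via the embedding $\mathcal F(\Gamma,R)\hookrightarrow\wF(G(\Gamma),R)$ of Proposition $3.2$. Taking $d=1$ shows $i_*(\alpha)\in\wF(G(\Gamma),M)$ for every $\alpha\in\mathcal F(\Gamma,M)$, and additivity is clear. Compatibility with the action of $\beta\in\mathcal F(\Gamma,R)$ is checked on $q\in\Gamma$ and $q\notin\Gamma$ separately: for $q\in\Gamma$ the nonzero terms of $(i_*(\beta)\cdot i_*(\alpha))(q)$ force both factors into $\Gamma$ and reproduce $(\beta\cdot\alpha)(q)$, while for $q\notin\Gamma$ a factorization $q=q'q''$ with $q',q''\in\Gamma$ would give $q\in\Gamma$, so both sides vanish. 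Injectivity is immediate, since $i_*(\alpha)=0$ gives $\alpha(q)=0$ for all $q\in\Gamma$.
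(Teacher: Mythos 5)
Your proposal is correct and follows exactly the route the paper intends: the paper's own proof is just the remark that the argument is ``similar to the proof of Proposition $1.3$ and Proposition $1.5(2)$'' (with well-definedness handled as in Proposition $3.2$), and your write-up fills in precisely those three ingredients --- the witness $d=d'd''$ and the finite-type/cancellativity argument for well-definedness, the formal module axioms \`a la Proposition $1.3(1)$, and the $i_*$ argument \`a la Proposition $1.5(2)$. No gaps; you have simply made explicit what the paper leaves as a reference.
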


\begin{proof}
 The proof is similar to the proof of Proposition $1.3$ and Proposition $1.5(2)$.
\end{proof}


\begin{prop}
Let $A$ be a commutative ring with unity and let $R$ be an $A$-algebra.
Let $D\in \Der_A(R,M)$ be an $A$-derivation and let $\bar \delta:(G(\Gamma),\cdot) \rightarrow (M,+)$ be a morphism of groups.
Then $$\overline D:\wF(G(\Gamma),R)\rightarrow \wF(G(\Gamma),M),\; \overline D(\alpha)(q):=D(\alpha(q)) + \alpha(q)\bar \delta(q),
\;(\forall)q\in G(\Gamma),$$
is an $A$-derivation.
\end{prop}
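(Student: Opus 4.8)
The plan is to mirror the proof of Proposition 1.7, the only genuinely new point being that one must first check that $\overline D$ actually lands in $\wF(G(\Gamma),M)$; everything after that is a formal computation identical to the one for $\widetilde D$, with the single modification that $\bar\delta$ is now a group morphism rather than a monoid morphism. I would organize the argument as: (i) well-definedness, (ii) $A$-linearity, (iii) the Leibniz rule.

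First I would verify well-definedness, which is the one step that has no counterpart in Proposition 1.7. Fix $\alpha\in\wF(G(\Gamma),R)$ and choose $d\in\Gamma$ with the property that $\alpha(q)\neq 0\Rightarrow dq\in\Gamma$. The key observation is the support containment $\supp(\overline D(\alpha))\subseteq\supp(\alpha)$: indeed, if $\alpha(q)=0$ then $D(\alpha(q))=D(0)=0$, since $D$ is $A$-linear, and $\alpha(q)\bar\delta(q)=0\cdot\bar\delta(q)=0$ in $M$, so $\overline D(\alpha)(q)=0$. Consequently the same $d$ witnesses membership in $\wF(G(\Gamma),M)$: if $\overline D(\alpha)(q)\neq 0$ then $\alpha(q)\neq 0$, hence $dq\in\Gamma$. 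Thus $\overline D(\alpha)\in\wF(G(\Gamma),M)$ and $\overline D$ is well defined.

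Next I would check $A$-linearity. Additivity $\overline D(\alpha+\beta)=\overline D(\alpha)+\overline D(\beta)$ follows pointwise from the additivity of $D$ and the distributivity of the module action, exactly as in Proposition 1.7 but now evaluated at $q\in G(\Gamma)$. For $r\in A$, identified with its image $i(r)\in R$, the identity $\overline D(r\alpha)(q)=D(r\alpha(q))+r\alpha(q)\bar\delta(q)=r\bigl(D(\alpha(q))+\alpha(q)\bar\delta(q)\bigr)=r\,\overline D(\alpha)(q)$ uses only the $A$-linearity of $D$; these are the same one-line computations as before. For the Leibniz rule I would evaluate at $q\in G(\Gamma)$ and expand $(\alpha\cdot\beta)(q)=\sum_{q'q''=q}\alpha(q')\beta(q'')$ (a finite sum by Proposition 3.2), apply $D$ termwise via its Leibniz rule, and use $\bar\delta(q)=\bar\delta(q'q'')=\bar\delta(q')+\bar\delta(q'')$ for every factorization $q=q'q''$. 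This last point is precisely where the group-morphism property of $\bar\delta$ enters, replacing the monoid-morphism property of $\delta$ in Proposition 1.7. Regrouping the four resulting sums into $\sum_{q'q''=q}\bigl[D(\alpha(q'))+\alpha(q')\bar\delta(q')\bigr]\beta(q'')$ and $\sum_{q'q''=q}\alpha(q')\bigl[D(\beta(q''))+\beta(q'')\bar\delta(q'')\bigr]$ yields $\overline D(\alpha\cdot\beta)=\overline D(\alpha)\cdot\beta+\alpha\cdot\overline D(\beta)$, where the products are the $\wF(G(\Gamma),R)$-module operations on $\wF(G(\Gamma),M)$ defined before Proposition 3.3.

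The main obstacle is the well-definedness step; once the support containment $\supp(\overline D(\alpha))\subseteq\supp(\alpha)$ is noted, the remaining verifications are formally identical to Proposition 1.7 and require no new idea, since the convolution on $G(\Gamma)$ and the finiteness of the defining sums were already established in Proposition 3.2.
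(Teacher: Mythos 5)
Your proof is correct and takes essentially the same route as the paper, whose entire proof of this proposition is the remark that the argument of Proposition 1.7 carries over verbatim. Your explicit well-definedness check, via the support containment $\supp(\overline D(\alpha))\subseteq\supp(\alpha)$ so that the same $d\in\Gamma$ witnesses $\overline D(\alpha)\in\wF(G(\Gamma),M)$, is precisely the one point the paper leaves implicit, and it is the right thing to verify.
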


\begin{proof}
The proof if similar to the proof of Proposition $1.7$.
\end{proof}

\begin{obs}
\emph{
According to Proposition $3.1$, if $\delta:(\Gamma,\cdot)\rightarrow (M,+)$ is a morphism of monoids, then $\delta$
extends naturally to a morphism of groups 
$$\bar \delta:(G(\Gamma),\cdot)\rightarrow (M,+),\; \bar \delta(\frac{m}{n}):=\delta(m)-\delta(n),\;(\forall)m,n\in\Gamma.$$
It follows that the derivation $\widetilde D:\mathcal(\Gamma,R)\rightarrow \mathcal(\Gamma,M)$ defined in Proposition $1.7$
extends naturally the derivation $\overline D:\wF(G(\Gamma),R)\rightarrow \wF(G(\Gamma),M)$ defined in Proposition $3.3$.}

\emph{Moreover, the following diagramm is commutative:
$$ \mathcal F(\Gamma,R)\stackrel{\widetilde D}{\longrightarrow} \mathcal F(\Gamma,M)$$
$$i_* \downarrow \hspace{70pt}  \downarrow i_*$$
$$ \wF(\Gamma,R)\stackrel{\overline D}{\longrightarrow} \wF(\Gamma,M).$$}
\end{obs}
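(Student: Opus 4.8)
The plan is to read the phrase ``$\overline D$ extends $\widetilde D$'' as the precise assertion that the stated square commutes, i.e. that $i_*\circ \widetilde D = \overline D\circ i_*$ as maps $\mathcal F(\Gamma,R)\to \wF(G(\Gamma),M)$, and then to verify this equality of functions by evaluating both sides at an arbitrary point $q\in G(\Gamma)$. Before that, I would record the one structural input that makes the diagram close: applying the universal property of Proposition $3.1$ to the monoid morphism $\delta:\Gamma\to (M,+)$, the induced group morphism $\bar\delta$ satisfies $\bar\delta\circ i = \delta$, which under the identification of $\Gamma$ with $i(\Gamma)\subset G(\Gamma)$ reads $\bar\delta(q)=\delta(q)$ for every $q\in\Gamma$. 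Concretely $\bar\delta(n/1)=\delta(n)-\delta(1)=\delta(n)$, since a monoid morphism sends $1$ to $0\in M$. This compatibility is exactly what is needed to match the two derivations on the copy of $\Gamma$ sitting inside $G(\Gamma)$.

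Next I would fix $\alpha\in \mathcal F(\Gamma,R)$ and $q\in G(\Gamma)$ and split into two cases according to whether $q$ lies in $\Gamma$ or not. If $q\in\Gamma$, then $i_*$ acts as the identity, so the left-hand side is $i_*(\widetilde D(\alpha))(q)=\widetilde D(\alpha)(q)=D(\alpha(q))+\alpha(q)\delta(q)$, while the right-hand side is $\overline D(i_*(\alpha))(q)=D(i_*(\alpha)(q))+i_*(\alpha)(q)\bar\delta(q)=D(\alpha(q))+\alpha(q)\bar\delta(q)$; these agree by the compatibility $\bar\delta(q)=\delta(q)$ recorded above. If instead $q\notin\Gamma$, then $i_*(\alpha)(q)=0$ by definition of $i_*$, and also $i_*(\widetilde D(\alpha))(q)=0$; thus the left-hand side is $0$, while the right-hand side is $D(0)+0\cdot\bar\delta(q)=0$, using that $D$ is $A$-linear and hence $D(0)=0$. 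In both cases the two sides coincide, so $i_*\circ\widetilde D=\overline D\circ i_*$ and the square commutes, which is the content of the claim.

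The verification is almost entirely formal, so I do not expect a serious obstacle; the two points that actually carry content are the off-$\Gamma$ vanishing $D(0)=0$ together with $i_*(\alpha)(q)=0$, and the on-$\Gamma$ agreement $\bar\delta|_\Gamma=\delta$, which is the only place where the universal property of the Grothendieck group from Proposition $3.1$ is genuinely used. I would also confirm at the outset that the vertical arrows are the intended inclusions — the left $i_*$ from Proposition $3.2$ and the right $i_*$ from Proposition $3.3$ — and that $\overline D$ is the derivation built in Proposition $3.4$ from the \emph{same} $D$ and from $\bar\delta$; once these identifications are fixed, the computation above shows directly that $\overline D$ restricts to $\widetilde D$ along $i_*$, which is precisely what it means for $\overline D$ to extend $\widetilde D$.
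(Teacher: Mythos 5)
Your proof is correct and is exactly the intended argument behind this remark, which the paper states without proof: the on-$\Gamma$ identity $\bar\delta(n/1)=\delta(n)-\delta(1)=\delta(n)$ from Proposition $3.1$, plus the off-$\Gamma$ vanishing $D(0)+0\cdot\bar\delta(q)=0$, gives $i_*\circ\widetilde D=\overline D\circ i_*$ pointwise. You also rightly read the paper's garbled phrasing (and its misattribution of $\overline D$ to Proposition $3.3$ rather than $3.4$) as the assertion that $\overline D$ extends $\widetilde D$, which is what the commuting square says.
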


In the following, we specialize $\Gamma$ to $(\mathbb N^*,\cdot)$, hence $G(\Gamma)=G(\mathbb N^*)=\mathbb Q^*_+$.
Similary to Proposition $2.4$ we show the following result.

\begin{prop}
We have that the $R$-algebra isomorphism
$$\mathcal F(\mathbb Q_+^*,R)\cong R[[x_1,x_2,\ldots]][x_1^{-1},x_2^{-1},\ldots].$$
\end{prop}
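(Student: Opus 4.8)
The plan is to extend the isomorphism of Proposition 2.4 by allowing negative exponents, exploiting that $\mathbb{Q}_+^*$ is the free abelian group on the primes $\mathbb{P}=\{p_1,p_2,\ldots\}$. First I would record that every $q\in\mathbb{Q}_+^*$ has a unique expansion $q=\prod_i p_i^{a_i(q)}$ with $a_i(q)\in\mathbb{Z}$ and almost all $a_i(q)=0$; this follows from Proposition 3.1 together with unique factorization in $\mathbb{N}^*$, and it lets me attach to $q$ the Laurent monomial $\mathbf{x}^{a(q)}:=\prod_i x_i^{a_i(q)}$. The group law $q'q''=q$ then translates into addition of exponent vectors, i.e. $\mathbf{x}^{a(q')}\mathbf{x}^{a(q'')}=\mathbf{x}^{a(q)}$, which is exactly the bookkeeping needed to match the convolution product with multiplication of series.

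Next I would define the candidate map $\Phi:\wF(\mathbb{Q}_+^*,R)\rightarrow R[[x_1,x_2,\ldots]][x_1^{-1},x_2^{-1},\ldots]$ by $\Phi(\alpha)=\sum_{q\in\mathbb{Q}_+^*}\alpha(q)\,\mathbf{x}^{a(q)}$ and check it is well defined. The crux is the dictionary between the two boundedness conditions: if $d=\prod_i p_i^{b_i}\in\mathbb{N}^*$ witnesses membership of $\alpha$ in $\wF(\mathbb{Q}_+^*,R)$ (so $\alpha(q)\neq 0\Rightarrow dq\in\mathbb{N}^*$), then $\alpha(q)\neq 0$ forces $a_i(q)\geq -b_i$ for every $i$; hence $\mathbf{x}^{b}\Phi(\alpha)=\sum_q\alpha(q)\,\mathbf{x}^{b+a(q)}$ is a genuine element of $R[[x_1,x_2,\ldots]]$, so that $\Phi(\alpha)=\mathbf{x}^{-b}\cdot(\mathbf{x}^b\Phi(\alpha))$ lies in the localization. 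Conversely, the elements of $R[[x_1,\ldots]][x_1^{-1},\ldots]$ are precisely the formal sums $\sum_{\mathbf{a}}c_{\mathbf{a}}\mathbf{x}^{\mathbf{a}}$ over finitely supported $\mathbf{a}\in\mathbb{Z}^{(\infty)}$ whose supports admit a common componentwise lower bound, which is the same constraint as bounded denominators. This exact matching of ``bounded denominator'' with ``finitely many, bounded-below negative exponents'' is the main obstacle and the step most prone to error, so I would isolate it explicitly.

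It then remains to verify that $\Phi$ is an isomorphism of $R$-algebras. Additivity and $R$-linearity are immediate, $\Phi(e)=1$ is clear, and multiplicativity follows from the monomial identity above exactly as in Proposition 2.4: the coefficient of $\mathbf{x}^{a(q)}$ in $\Phi(\alpha)\Phi(\beta)$ is $\sum_{q'q''=q}\alpha(q')\beta(q'')$, a finite sum by the finite-type property, which is $(\alpha\cdot\beta)(q)$. Injectivity is immediate since distinct $q$ give distinct exponent vectors, so $\Phi(\alpha)=0$ forces every $\alpha(q)=0$. For surjectivity I would write an arbitrary target element as $\mathbf{x}^{-b}f$ with $f=\sum_{n\in\mathbb{N}^*}c_n\mathbf{x}^{a(n)}\in R[[x_1,\ldots]]$ and $d=\prod_i p_i^{b_i}$, then use that $n\mapsto n/d$ is a bijection from $\mathbb{N}^*$ onto $\{q\in\mathbb{Q}_+^*:\operatorname{denom}(q)\mid d\}$ to set $\alpha(n/d):=c_n$ and $\alpha(q):=0$ otherwise; this $\alpha$ lies in $\wF(\mathbb{Q}_+^*,R)$ with witness $d$ and satisfies $\Phi(\alpha)=\mathbf{x}^{-b}f$.

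Finally, I would note the structural reformulation: under Proposition 2.4 the prime-indicator function $\alpha^{i}$ (with $\alpha^i(p_i)=1$ and $\alpha^i=0$ elsewhere) corresponds to $x_i$, and its image in $\wF(\mathbb{Q}_+^*,R)$ is invertible, with inverse the indicator of $1/p_i$; hence $\wF(\mathbb{Q}_+^*,R)$ is the localization of $\mathcal{F}(\mathbb{N}^*,R)$ at the multiplicative set generated by the $\alpha^i$, and the claimed isomorphism is just that localization transported through Proposition 2.4. I would nonetheless keep the direct computation above as the primary argument, since its injectivity step is coefficientwise and so does not require $R$ to be a domain.
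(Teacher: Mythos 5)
Your proposal is correct and follows essentially the same route as the paper: the paper defines the very same map $\Phi(\alpha)=\sum_{q}\alpha(q)q^{x}$ on $\wF(\mathbb Q_+^*,R)$, proves well-definedness exactly by your argument (a denominator witness $d=p_1^{b_1}\cdots p_r^{b_r}$ makes $\sum_q \alpha(q)(qd)^x$ a genuine power series, so $\Phi(\alpha)$ lies in the localization), and then asserts that $\Phi$ is an $R$-algebra isomorphism, a verification you carry out explicitly. Your closing remark that $\wF(\mathbb Q_+^*,R)$ is the localization of $\mathcal F(\mathbb N^*,R)$ at the monomials (prime indicators) is also exactly the observation underlying the paper's Corollary $3.7$.
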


\begin{proof}
Let $\alpha\in \mathcal F(\mathbb Q_+^*,R)$. As in the section $2$, let $\mathbb P=\{p_1,p_2,p_3,\ldots\}$ be the set of prime numbers.
Any positive rational number $q\in\mathbb Q_+^*$ can be written as $q=p_1^{a_1}\cdots p_r^{a_r}$, where $r\geq 1$ and $a_i\in \mathbb Z$. 
We denote $q^x:=x_1^{a_1}\cdots x_r^{a_r}$ and
we define $$\Phi:\mathcal F(\mathbb Q_+^*,R) \rightarrow R[[x_1,x_2,\ldots]][x_1^{-1},x_2^{-1},\ldots], \; \Phi(\alpha):=\sum_{q\in\mathbb Q}\alpha(q)q^x.$$
For any $\alpha \in \mathcal F(\mathbb Q_+^*,R)$ there exist a positive integer $d$ such that if $\alpha(q)\neq 0$ for some 
$q\in \mathbb Q_{+}^*$ it follows that $dq\in\mathbb N^*$. We may assume that $d=p_1^{b_1}\cdots p_r^{b_r}$, where 
$r,b_1,\ldots,b_r$ are some nonnegative integers. In follows that $\sum_{q\in\mathbb Q}\alpha(q)(qd)^x \in R[[x_1,x_2,\ldots,]]$, hence $\Phi(\alpha)$ is well
defined as an element in the ring $R[[x_1,x_2,\ldots]][x_1^{-1},x_2^{-1},\ldots]$.
It is easy to check that $\Phi$ is an isomorphism of $R$-algebras.
\end{proof}

\begin{cor}
If $R$ is an UFD with the property that 
$$R[[x_1,\ldots,x_r]] \text{ is UFD }, (\forall)r\geq 1,$$ then $\wF(\mathbb N^*,R)$ is UFD.
\end{cor}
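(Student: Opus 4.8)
If $R$ is an UFD with the property that $R[[x_1,\ldots,x_r]]$ is UFD for all $r\geq 1$, then $\wF(\mathbb N^*,R)$ is UFD.

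Wait, I need to look carefully. The corollary says $\wF(\mathbb N^*,R)$ but based on context it should be $\wF(\mathbb Q_+^*,R)$ since that's what Proposition 3.6 characterizes. Let me plan the proof.

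**My proof plan:**

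By Proposition 3.6, we have an $R$-algebra isomorphism $\wF(\mathbb Q_+^*,R)\cong R[[x_1,x_2,\ldots]][x_1^{-1},x_2^{-1},\ldots]$. So it suffices to show that the Laurent-type ring $S:=R[[x_1,x_2,\ldots]][x_1^{-1},x_2^{-1},\ldots]$ is a UFD. The key observation is that $S$ is a localization of $T:=R[[x_1,x_2,\ldots]]$ at the multiplicative set generated by the variables $x_1,x_2,\ldots$. By Proposition 2.4 (its infinite-variable form), $T\cong\mathcal F(\mathbb N^*,R)$, and by the Cashwell–Everett Theorem 2.7 (the quoted result above), $T$ is a UFD under exactly the stated hypothesis on $R$. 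The plan is therefore to invoke the standard fact that a localization of a UFD is again a UFD.

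**The main steps, in order:**

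\begin{enumerate}
\item[(1)] First I would apply Proposition 3.6 to reduce the claim to showing $S=R[[x_1,x_2,\ldots]][x_1^{-1},x_2^{-1},\ldots]$ is a UFD.
\item[(2)] Next I would identify $S$ as the localization $T[W^{-1}]$, where $T=R[[x_1,x_2,\ldots]]$ and $W$ is the multiplicatively closed set generated by $\{x_i\}_{i\geq1}$, i.e. $W=\{x_1^{a_1}\cdots x_k^{a_k}: k\geq1,\ a_i\in\mathbb N\}$.
\item[(3)] Then I would invoke Theorem 2.7 to conclude that $T$ is a UFD: indeed $\mathcal F(\mathbb N^*,R)\cong T$ by Proposition 2.4, so the Cashwell–Everett hypothesis gives that $T$ is a UFD.
\item[(4)] Finally I would apply the general principle that any localization $T[W^{-1}]$ of a UFD $T$ is itself a UFD, yielding that $S$, hence $\wF(\mathbb Q_+^*,R)$, is a UFD.
\end{enumerate}

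**Where the difficulty lies:** Each $x_i$ is a prime element of $T$ (this follows because $T/(x_i)\cong R[[x_1,\ldots,\widehat{x_i},\ldots]]$ is a domain, where the hat denotes omission), so the localization in step (2) simply inverts a set of primes; localizing a UFD at a multiplicative set generated by primes inverts exactly those primes and keeps all others prime, so the UFD property is preserved. The one subtle point I expect to be the real obstacle is \emph{verifying the localization identification rigorously in the infinite-variable setting}: one must check that every element of $S$ can be written as $f/w$ with $f\in T$ and $w\in W$, and that the ring operations agree with the localization $T[W^{-1}]$. This is where the ``finite support of negative exponents'' condition built into $S$ (only finitely many $x_i^{-1}$ occur in any given element, and with bounded negative degree) must be matched against the defining condition of $\wF$ (the existence of a single denominator $d\in\mathbb N^*$); the clearing-denominators argument mirrors the well-definedness argument already carried out in the proof of Proposition 3.6, so I would point back to that rather than reprove it.

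\begin{proof}
By Proposition $3.6$, there is an $R$-algebra isomorphism $\wF(\mathbb Q_+^*,R)\cong S$, where we set $S:=R[[x_1,x_2,\ldots]][x_1^{-1},x_2^{-1},\ldots]$. It therefore suffices to prove that $S$ is a UFD.

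Let $T:=R[[x_1,x_2,\ldots]]$ and let $W$ be the multiplicatively closed subset of $T$ generated by the variables, $W=\{x_1^{a_1}\cdots x_k^{a_k}\;:\;k\geq 1,\ a_1,\ldots,a_k\in\mathbb N\}$. By the very definition of inverting the $x_i$, we have $S=T[W^{-1}]$, the localization of $T$ at $W$.

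By Proposition $2.4$, $T\cong\mathcal F(\mathbb N^*,R)$, so the hypothesis on $R$ together with Theorem $2.7$ (Cashwell--Everett) gives that $T$ is a UFD. Moreover, each $x_i$ is a prime element of $T$: the quotient $T/(x_i)$ is isomorphic to the power series ring in the remaining variables, hence a domain, so $(x_i)$ is a prime ideal. Thus $W$ is generated by prime elements.

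Finally, a localization of a UFD is again a UFD: if $T$ is a UFD and $W\subset T$ is any multiplicatively closed set, then every nonzero element of $T[W^{-1}]$ factors as a unit times a product of primes of $T$ that do not divide any element of $W$, and such primes remain prime in $T[W^{-1}]$. Applying this to $T[W^{-1}]=S$ shows that $S$ is a UFD. Hence $\wF(\mathbb Q_+^*,R)\cong S$ is a UFD, as claimed.
\end{proof}
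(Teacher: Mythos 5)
Your proof is correct and takes essentially the same route as the paper: reduce via Proposition $3.6$, use Theorem $2.7$ (through the identification $\mathcal F(\mathbb N^*,R)\cong R[[x_1,x_2,\ldots]]$ of Proposition $2.4$) to conclude $R[[x_1,x_2,\ldots]]$ is a UFD, recognize the Laurent ring as the localization at the multiplicative set of monomials, and invoke the fact that a localization of a UFD is a UFD. Your additional check that each $x_i$ is prime is harmless but unnecessary, since localization at \emph{any} multiplicative set preserves the UFD property; you are also right that the statement's $\mathcal F^f(\mathbb N^*,R)$ is a typo for $\mathcal F^f(\mathbb Q^*_+,R)$.
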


\begin{proof}
According to Theorem $2.7$, the ring $R[[x_1,x_2,\ldots,]]$ is UFD. On the other hand,
$$R[[x_1,x_2,\ldots]][x_1^{-1},x_2^{-1},\ldots] \cong S^{-1}R[[x_1,x_2,\ldots]],$$
where $S$ is the multiplicatively closed set consisting in all the monomials from $R[[x_1,x_2,\ldots]]$.
Since the localization of an UFD is an UFD, the conclusion follows from Proposition $3.6$.
\end{proof}

\begin{obs}\emph{
If $G$ is an ordered abelian group and $K$ is a field, the field of \emph{Hahn series} $K((T^{G}))$ with the value group $G$ is
the set $$K((T^{G}))=\{\alpha:G \rightarrow K\;:\; \{g\in G:\; \alpha(g)\neq 0\} \text{ is well ordered } \},$$
with the operations
$(\alpha+\beta)(g)=\alpha(g)+\beta(g),\; (\alpha\cdot\beta)(g)=\sum_{g'g''=g}\alpha(g')\beta(g'')$,
where the last sum is taken for $\alpha(g'),\beta(g'')\neq 0$. See \cite{hahn} and \cite{neum} for further details.
If $G=\mathbb Q_{+}^*=G(\mathbb N^*)$, then we have that $\wF(G,K)\subset K((T^G))$.}
\end{obs}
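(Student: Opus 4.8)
The plan is to verify that the defining support condition for membership in $\wF(\mathbb Q_+^*,K)$ forces the support of each such function to be well ordered with respect to the natural order on $\mathbb Q_+^*$, which is exactly the membership condition for the Hahn series field $K((T^G))$. Throughout I equip $G=\mathbb Q_+^*$ with the usual order inherited from $\mathbb Q$, namely $q<q'$ as rational numbers; since $qr<q'r$ whenever $q<q'$ and $r\in\mathbb Q_+^*$, this makes $(\mathbb Q_+^*,\cdot)$ an ordered abelian group, and its restriction to $\Gamma=\mathbb N^*$ is the usual well order $1<2<3<\cdots$. Two things must be checked: first, that every $\alpha\in\wF(\mathbb Q_+^*,K)$ has well ordered support; second, that the addition and convolution product of $\wF(\mathbb Q_+^*,K)$ agree with those of $K((T^G))$, so that the inclusion is one of rings and not merely of sets.

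For the first point, let $\alpha\in\wF(\mathbb Q_+^*,K)$ and pick $d\in\mathbb N^*$ as in the definition, so that $\alpha(q)\neq 0$ implies $dq\in\mathbb N^*$. Hence $\supp(\alpha)\subseteq\{q\in\mathbb Q_+^*:dq\in\mathbb N^*\}=\frac1d\mathbb N^*$, the set $\{\frac{n}{d}:n\in\mathbb N^*\}$. The map $n\mapsto\frac{n}{d}$ is an order isomorphism from $(\mathbb N^*,<)$ onto $(\frac1d\mathbb N^*,<)$, so $\frac1d\mathbb N^*$ is well ordered of type $\omega$. Since any subset of a well ordered set is well ordered, $\supp(\alpha)$ is well ordered, and therefore $\alpha\in K((T^G))$. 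This already yields the set theoretic inclusion $\wF(\mathbb Q_+^*,K)\subseteq K((T^G))$.

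For the second point I would observe that addition is pointwise in both rings, so it visibly agrees, and that the convolution formulas $(\alpha\cdot\beta)(q)=\sum_{q'q''=q}\alpha(q')\beta(q'')$ are literally identical in the two settings. What must be noted is only that the finiteness of these sums is guaranteed from two viewpoints that coincide here: in $\wF(\mathbb Q_+^*,K)$ it follows from the finite type property of $\mathbb N^*$ exploited in Proposition $3.2$, while in $K((T^G))$ it is Neumann's lemma on products of well ordered sets; since every element of $\wF(\mathbb Q_+^*,K)$ is a Hahn series by the first point, both descriptions produce the same well defined product. Thus $\wF(\mathbb Q_+^*,K)$ is a subring of $K((T^G))$.

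I do not anticipate a serious obstacle: the entire content is the elementary observation that bounding the denominators of the support (the condition $dq\in\mathbb N^*$) confines it to a discrete, bounded below, order type $\omega$ subset of $\mathbb Q_+^*$. The only point requiring a moment of care is the explicit identification of the order on $G=\mathbb Q_+^*$ used to define $K((T^G))$; once one fixes the usual order on positive rationals, so that the embedded copy of $\Gamma=\mathbb N^*$ is well ordered, the result is immediate, and it simultaneously explains why the finite type hypothesis on $\Gamma$ in this section is the natural analogue of the well ordered support hypothesis in the theory of Hahn series.
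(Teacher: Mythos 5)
Your proof is correct, and it supplies exactly the argument the paper leaves implicit: the remark is stated without proof (only with references to Hahn and Neumann), and the intended justification is precisely your observation that the condition $dq\in\mathbb N^*$ confines the support to $\frac{1}{d}\mathbb N^*$, an order-isomorphic copy of $\mathbb N^*$ under the usual order on $\mathbb Q_+^*$, whose subsets are well ordered. Your additional care in fixing the order on $\mathbb Q_+^*$ and in checking that the two convolution products (finite by the finite-type property on one side, by Neumann's lemma on the other) coincide is a sound completion of what the paper merely asserts.
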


{}

\vspace{2mm} \noindent {\footnotesize
\begin{minipage}[b]{15cm}
Mircea Cimpoea\c s, Simion Stoilow Institute of Mathematics, Research unit 5, P.O.Box 1-764,\\
Bucharest 014700, Romania, E-mail: mircea.cimpoeas@imar.ro
\end{minipage}}

\end{document}